\theoremstyle{definition}
\newtheorem{defn}{Definition}[section]
\theoremstyle{plain}
\newtheorem{thm}[defn]{Theorem}
\newtheorem{prop}[defn]{Proposition}
\newtheorem{lem}[defn]{Lemma}
\newtheorem{claim}[defn]{Claim}
\numberwithin{equation}{section}
\title[A finite presentation for $\operatorname{Aut}(H_1(N_g;\mathbb Z_2),\cdot )$]{A finite presentation for the automorphism group of the first homology of a non-orientable surface over $\mathbb Z_2$ preserving the mod $2$ intersection form}
\author[R.~Kobayashi]{Ryoma Kobayashi}
\address{
(Ryoma Kobayashi)
Department of Mathematics,
Faculty of Science and Technology,
Tokyo University of Science,
Noda, Chiba, 278-8510, Japan
}
\email{kobayashi\_ryoma@ishikawa-nct.ac.jp}
\author[G.~Omori]{Genki Omori}
\address{
(Genki Omori)
Department of Mathematics,
Tokyo Institute of Technology,
Oh-okayama, Meguro, Tokyo 152-8551, Japan
}
\email{omori.g.aa@m.titech.ac.jp}
\date{\today}
\begin{document}
\maketitle
\begin{abstract}
Let $\operatorname{Aut}(H_1(N_g;\mathbb Z_2),\cdot )$ be the group of automorphisms on the first homology group with $\mathbb Z_2$ coefficients of a closed non-orientable surface $N_g$ preserving the mod $2$ intersection form. In this paper, we obtain a finite presentation for $\operatorname{Aut}(H_1(N_g;\mathbb Z_2),\cdot )$. As an application we calculate the second homology group of $\operatorname{Aut}(H_1(N_g;\mathbb Z_2),\cdot )$. 
\end{abstract}

\section{Introduction}

For $g\geq 1$ and $n\geq 0$, let $N_{g,n}$ be a compact connected
non-orientable surface of genus $g$ with $n$ boundary components (we denote $N_{g,0}$ by $N_g$) and a bilinear form $\cdot :H_1(N_g;\mathbb
Z_2)\times H_1(N_g;\mathbb Z_2) \rightarrow \mathbb Z_2$
the mod $2$ intersection form on the first homology group
$H_1(N_g;\mathbb Z_2)$ of $N_g$ with $\mathbb Z_2$ coefficients. We
represent $N_g$ by a sphere with $g$ crosscaps as in Figure~\ref{basis_h1}, i.e. we regard $N_g$ as a sphere with $g$ boundary components
attached a M\"{o}bius band to each boundary component.
We define $\operatorname{Aut}(H_1(N_g;\mathbb Z_2),\cdot )$ by the
subgroup of the automorphism group
$\operatorname{Aut}H_1(N_g;\mathbb Z_2)$ of $H_1(N_g;\mathbb Z_2)$
preserving the mod $2$ intersection form $\cdot $ . Note that $\operatorname{Aut}(H_1(N_g;\mathbb Z_2),\cdot )$ is isomorphic to $O(g,\mathbb Z_2)=\{A\in GL(g,\mathbb Z_2)\ |\ ^t\!AA=E\}$ by taking the basis $\{x_1, x_2, \dots , x_g \}$ for $H_1(N_g;\mathbb Z_2)$, where $x_i$ is a homology class of a one-sided simple closed curve $\mu _i$ in Figure~\ref{basis_h1} and $E$ is an identity matrix of  $GL(g,\mathbb Z_2)$ (cf. \cite{Mccarthy-Pinkall}). By Korkmaz \cite{Korkmaz} and Szepietowski \cite{Szepietowski1} we have isomorphisms
\[
\operatorname{Aut}(H_1(N_g;\mathbb Z_2),\cdot ) \cong \left\{ \begin{array}{ll}
 \operatorname{Sp}(2h,\mathbb Z_2)&\text{if} \ g=2h+1,   \\
 \operatorname{Sp}(2h,\mathbb Z_2)\ltimes \mathbb Z_2^{2h+1}&\text{if} \ g=2h+2.
 \end{array} \right.
\]

Let $a_i\ (i=1,\dots ,g-1,\ \text{for}\ g\geq 2), \ b\ (\text{for}\ g\geq 4)\in \operatorname{Aut}(H_1(N_g;\mathbb Z_2),\cdot )$ be the following elements:
\[
a_i :\left\{ \begin{array}{llll}
 x_i & \mapsto & x_{i+1}, & \\
 x_{i+1} & \mapsto & x_i, & \\
 x_k & \mapsto & x_k & (k\not=i,i+1 ), \\
 \end{array} \right.
b :\left\{ \begin{array}{lll}
 x_1 & \mapsto & x_2+x_3+x_4,   \\
 x_2 & \mapsto & x_1+x_3+x_4,   \\
 x_3 & \mapsto & x_1+x_2+x_4,   \\
 x_4 & \mapsto & x_1+x_2+x_3,   \\
 x_k & \mapsto & x_k  \hspace{1cm}(k\not=1,2,3,4 ).
 \end{array} \right. 
\]

\begin{figure}[h]
\includegraphics[scale=0.80]{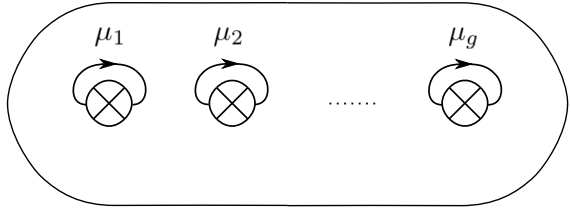}
\caption{Simple closed curves $\mu _1, \mu _2, \dots , \mu _g$ in $N_g$ representing the basis $x_1, x_2, \dots , x_g$ for $H_1(N_g;\mathbb Z_2)$ respectively.}\label{basis_h1}
\end{figure}

In this paper, we give a finite presentation for $\operatorname{Aut}(H_1(N_g;\mathbb Z_2),\cdot )$.

\begin{thm}\label{thm-main}

If $g=1$, $2$, $3$, then $\operatorname{Aut}(H_1(N_g;\mathbb Z_2),\cdot )$ is the following group.

\begin{itemize}
 \item $\operatorname{Aut}(H_1(N_1;\mathbb Z_2),\cdot )=1$,
 \item $\operatorname{Aut}(H_1(N_2;\mathbb Z_2),\cdot )=\bigl< a_1 \big| a_1^2=1\bigr>\cong \mathbb Z_2$,
 \item $\operatorname{Aut}(H_1(N_3;\mathbb Z_2),\cdot )=\bigl< a_1, a_2 \big| a_1^2=a_2^2=(a_1a_2)^3=1\bigr>$.
\end{itemize}
If $g\geq 4$ is odd, $g=4$ or $6$, then $\operatorname{Aut}(H_1(N_g;\mathbb Z_2),\cdot )$ admits a presentation with generators $a_1, \dots , a_{g-1}, b$ and relations:

\begin{enumerate}
 \item $a_i^2=b^2=1$\hspace{2cm} for $i=1,\dots ,g-1$,
 \item $(a_ia_j)^2=1$\hspace{2cm} for $g\geq 4,\ |i-j|>1$,
 \item $(a_ia_{i+1})^3=1$\hspace{2cm} for $g\geq 3,\ i=1,\dots ,g-2$,
 \item $(a_ib)^2=1$\hspace{2cm} for $g\geq 4,\ i\not=4$,
 \item $(a_4b)^3=1$\hspace{2cm} for $g\geq 5$,
 \item $(a_2a_3a_4a_5a_6b)^{12}=(a_1a_2a_3a_4a_5a_6b)^9$\hspace{2cm} for $g\geq 7$.
\end{enumerate}
If $g\geq 8$ is even, then $\operatorname{Aut}(H_1(N_g;\mathbb
 Z_2),\cdot )$ admits a presentation with generators $a_1, \dots
 , a_{g-1}, b, b_0, \dots , b_{\frac{g-2}{2}}$ and relations
 {\rm (1)-(6)} above and the following relations:

\begin{enumerate}
 \item[(7)] $b_0=a_1,\ b_1=b,\ b_2=(a_1a_2a_3a_4a_5b)^5$, 
 \item[(8)] $b_{i+1}=(b_{i-1}a_{2i}a_{2i+1}a_{2i+2}a_{2i+3}b_i)^5(b_{i-1}a_{2i}a_{2i+1}a_{2i+2}a_{2i+3})^{-6}$\hspace{2cm} for $2\leq i\leq \frac{g-4}{2}$,
 \item[(9)] $[a_{g-5},b_{\frac{g-2}{2}}]=1$.
\end{enumerate}
\end{thm}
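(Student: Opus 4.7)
The plan is first to verify that all of the relations (1)--(9) hold in $\operatorname{Aut}(H_1(N_g;\mathbb Z_2),\cdot )$, which produces a well-defined surjection $\pi : G_g \twoheadrightarrow \operatorname{Aut}(H_1(N_g;\mathbb Z_2),\cdot )$ from the abstractly presented group $G_g$. This is a finite matrix computation on the basis $\{x_1,\ldots,x_g\}$ using the explicit formulas for $a_i$ and $b$; the long identities (6) and (8) take more care but are routine checks on each basis vector. The cases $g=1,2,3$ are then dispatched directly: they match $\{1\}$, $\mathbb Z_2$, and $\operatorname{Sp}(2,\mathbb Z_2)\cong S_3$ respectively, with the last case giving exactly the stated Coxeter presentation.

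For $g\geq 4$ the plan is to prove injectivity of $\pi$ by induction on $g$. Inside $G_g$ I would isolate a subgroup $H$ generated by $a_1,\ldots,a_{g-2},b$ (together with the relevant $b_i$'s when $g$ is even) and show that $H$ maps under $\pi$ onto the stabilizer of $x_g$, which is naturally identified with $\operatorname{Aut}(H_1(N_{g-1};\mathbb Z_2),\cdot )$. Checking that the defining relations of $G_{g-1}$ are consequences of the relations defining $G_g$, when restricted to these generators, would yield by the inductive hypothesis a surjection $G_{g-1}\twoheadrightarrow H$. A Schreier-type coset enumeration for $H$ in $G_g$, using a transversal constructed from the orbit of $x_g$ under short words in the $a_i$'s and $b$, would then bound $[G_g:H]$ above by the geometric index $[\operatorname{Aut}(H_1(N_g;\mathbb Z_2),\cdot ):\pi(H)]$, forcing $\pi$ to be an isomorphism. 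In the even-genus case the induction naturally splits into two steps: one first peels off the abelian normal factor $\mathbb Z_2^{2h+1}$ (via the $b_i$'s) to reduce to odd genus $g-1$, and then applies the odd-genus result.

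The main obstacle I expect is twofold. First, showing that the relations of $G_{g-1}$, especially the long relation (6) and the recursive relations (8), hold in $H$ as consequences of the defining relations of $G_g$ requires substantial word manipulation; one will almost certainly need auxiliary lemmas expressing conjugates of the $a_i$'s and of $b$ by long words in manageable form. Second, and more seriously, in the even-genus case one must argue that the recursively defined elements $b_0,\ldots,b_{(g-2)/2}$ together with their $S_g$-conjugates generate an abelian subgroup of order exactly $2^{2h+1}$, with the single commutation relation (9) sufficing to enforce all the needed commutations. That combinatorial claim—explaining why the carefully chosen conjugation pattern in (8) produces a basis for $\mathbb Z_2^{2h+1}$ and why (9) propagates to the other pairs via the $a_i$-action—appears to be the crux of the even case, and verifying it against the known order of $\operatorname{Sp}(2h,\mathbb Z_2)\ltimes \mathbb Z_2^{2h+1}$ will be the central technical task.
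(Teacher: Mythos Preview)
Your plan is a genuinely different strategy from the paper's, and it has a real obstacle beyond the ones you flag.

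The paper does not argue by induction on $g$ or by coset enumeration at all. Instead it uses the short exact sequence
\[
1\longrightarrow \Gamma_2(N_g)\longrightarrow \mathcal{M}(N_g)\stackrel{\rho_2}{\longrightarrow}\operatorname{Aut}(H_1(N_g;\mathbb Z_2),\cdot)\longrightarrow 1,
\]
together with Stukow's explicit finite presentation of $\mathcal{M}(N_g)$ and Szepietowski's theorem that $\Gamma_2(N_g)$ is normally generated by the single Y\nobreakdash-homeomorphism $y$ (hence by $\{a_1^2,\dots,a_{g-1}^2,b^2,y\}$). One therefore obtains a presentation of the target group simply by adjoining $y=1$ and $a_i^2=b^2=1$ to Stukow's relator list and then simplifying via Tietze moves. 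The substantive work is a handful of word identities: a general braid/commutation lemma $(g_1\cdots g_n)^{n+1}=(g_1^2g_2\cdots g_n)^n$ which kills the ``chain'' relations (A5), (C1), (C4) once squares vanish; an easy check that (B1)--(B8) collapse under $y=1$; and a short computation that the $g=6$ relation (A9a) follows from (1)--(5). No surjectivity/injectivity counting is needed because both surjectivity of $\rho_2$ and the kernel description are imported.

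Your inductive route could in principle succeed, but there are two concrete issues. First, a misconception: the auxiliary generators $b_0,\dots,b_{(g-2)/2}$ are \emph{not} generators of the normal $\mathbb Z_2^{2h+1}$ factor; by (7)--(8) they are words in $a_1,\dots,a_{g-1},b$ (geometrically, Dehn twists about specific curves), introduced solely so that the single extra relation (9) can be stated. So ``peeling off the abelian factor via the $b_i$'s'' is not what is happening. Second, and more seriously, your induction alternates parity: passing from odd $g$ (relations (1)--(6) only) down to even $g-1\ge 8$ forces you to \emph{derive} relation (9) for $G_{g-1}$, namely $[a_{g-6},b_{(g-3)/2}]=1$, purely from (1)--(6) in $G_g$ restricted to $a_1,\dots,a_{g-2},b$. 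In the paper's framework this commutation is immediate because the corresponding curves on $N_{g-1}$ are disjoint, but abstractly it is a long word identity whose derivation from (1)--(6) is precisely the hard content you would be re-proving by hand. Without a mechanism for this step the induction does not close.
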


We read every word of every group in this paper from right to left. In Section~\ref{proof-thm}, we will prove Theorem~\ref{thm-main} for $g\geq 4$. 
Theorem~\ref{thm-main} is clear for $g=1,2$. For $g=3,4$, Szepietowski \cite[in the proof of Theorem 5.5]{Szepietowski2} gave the presentation for $\operatorname{Aut}(H_1(N_g;\mathbb Z_2),\cdot )$. Note that $\operatorname{Aut}(H_1(N_3;\mathbb Z_2),\cdot )$ is isomorphic to the dihedral group $D_6$ and the symmetric group $S_3$. 
By the result of Korkmaz \cite[Corollary 4.1]{Korkmaz} and Theorem~\ref{thm-main}, the first homology group of $\operatorname{Aut}(H_1(N_g;\mathbb Z_2),\cdot )$ is as follows.
\[
\hspace{0.5cm}
H_1(\operatorname{Aut}(H_1(N_g;\mathbb Z_2),\cdot );\mathbb Z)= \left\{ \begin{array}{lll}
\hspace{0.3cm} 0 & \hspace{0.8cm} \text{for} \ g=1,\ g\geq 7, \\
 \bigl< [a_1]\bigr> \cong \mathbb Z_2 & \hspace{0.8cm} \text{for} \ g=2,\ 3,\ 5,\ 6,\\
 \bigl< [a_1],[b]\bigr> \cong \mathbb Z_2\oplus \mathbb Z_2 & \hspace{0.8cm} \text{for} \ g=4. 
 \end{array} \right. 
\]
Note that the above equality is known for $g\geq 7$ odd (see, for instance, \cite{Taylor}).

In Section~\ref{H_1andH_2}, by using the presentation for $\operatorname{Aut}(H_1(N_g;\mathbb Z_2),\cdot )$ obtained in Theorem~\ref{thm-main}, we calculate the second homology group of $\operatorname{Aut}(H_1(N_g;\mathbb Z_2),\cdot )$ for $g\geq 9$. We get the following theorem.

\begin{thm}\label{h_2}

For $g\geq 9$ or $g=7$, the second homology group of $\operatorname{Aut}(H_1(N_g;\mathbb Z_2),\cdot
 )$ is trivial.

\end{thm}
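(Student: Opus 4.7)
My plan is to apply Hopf's formula to the finite presentation of $G:=\operatorname{Aut}(H_1(N_g;\mathbb Z_2),\cdot)$ established in Theorem~\ref{thm-main}. Writing $G=F/N$ with $F$ free on the stated generators and $N$ the normal closure of the relators, Hopf's formula gives
\[
H_2(G;\mathbb Z) \cong (N \cap [F,F])/[F,N].
\]
Since $H_1(G;\mathbb Z)=0$ for $g=7$ and $g\geq 9$ by the computation recalled after Theorem~\ref{thm-main}, the group $G$ is perfect and the five-term exact sequence collapses to
\[
0 \to H_2(G;\mathbb Z) \to N/[F,N] \xrightarrow{\varphi} F^{\mathrm{ab}} \to 0.
\]
The problem reduces to showing that $\varphi$ is injective, i.e., that every integer combination of the defining relators whose abelianized image in $F$ vanishes must already lie in $[F,N]$.

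I would treat the odd and even ranges separately. For odd $g=2h+1$ with $h\geq 3$ (so $g=7,9,11,\ldots$), only relations (1)-(6) are in play; here $G\cong \operatorname{Sp}(2h,\mathbb Z_2)$ and the calculation is self-contained. For even $g=2h+2$ with $h\geq 4$ (so $g\geq 10$), the extra generators $b_0,\ldots,b_{(g-2)/2}$ and relations (7)-(8) are \emph{definitional}: they identify each $b_i$ with a specific word in the $a_j$ and $b$, so their classes in $N/[F,N]$ pair off bijectively with the new generators of $F^{\mathrm{ab}}$ and cannot lie in $\ker\varphi$. The only genuinely new relator is the commutator (9), which must be shown to reduce, modulo $[F,N]$, to a consequence of (1)-(6) once the definitions (8) are unwound. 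The restriction $h\geq 4$ is indispensable: for $g=8$ (i.e.\ $h=3$) the index range $2\leq i\leq (g-4)/2$ in (8) is empty, and the reduction just sketched degenerates, which explains the exclusion of $g=8$ from the statement.

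Concretely, each defining relator $r$ contributes a class $[r]$ of $N/[F,N]$, and $\varphi([r])$ is its abelianized image in $F^{\mathrm{ab}}$. For the many short relators of the form $w^k=1$ with $w$ involutory, these images are elementary and easy to track. The principal obstacle will be controlling the two long relations. Relation (6), $(a_2a_3a_4a_5a_6b)^{12}=(a_1a_2a_3a_4a_5a_6b)^9$, is a Coxeter-type identity whose discrepancy word must be exhibited explicitly as a product of conjugates of (1)-(5) modulo $[F,N]$. In the even case, relation (9) after substituting the recursive definitions (7)-(8) becomes a highly intricate word, and showing that it lies in $[F,N]$ will require an induction on $i$ to unwind $b_{(g-2)/2}$ together with careful use of the shorter relators. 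These bookkeeping tasks, combined with the separate treatment of the small-genus edge cases $g=7$ and $g=10$, are where the bulk of the proof will reside.
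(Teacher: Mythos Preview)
Your plan correctly identifies Hopf's formula and the exact sequence
\[
0 \to H_2(G;\mathbb Z) \to N/[F,N] \xrightarrow{\varphi} F^{\mathrm{ab}} \to 0
\]
as the right framework, and the paper begins in exactly this way. But your strategy of showing $\varphi$ injective by direct ``bookkeeping'' has two genuine gaps that the paper fills with ingredients you have not mentioned.

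First, the commutator relators of types (2), (4) and (9) lie automatically in $\ker\varphi$, so your task is to show each of them is already in $[F,N]$. This is \emph{not} elementary bookkeeping: the paper proves it (Lemma~\ref{commutator}) by interpreting these relators as commutators $\{t_\alpha,t_\beta\}$ of Dehn twists along disjoint curves, cutting along $\alpha$, and invoking the theorems of Harer and Stukow that $H_1$ of the relevant mapping class group of the cut surface vanishes, so that $t_\beta$ is a product of commutators there. Without this topological input (which is also where the bound $g\ge 9$ enters, not $g=8$ as you suggest), you have no mechanism for disposing of these relators.

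Second, even after the commutator relators are killed, $\ker\varphi$ is \emph{not} zero: the exponent-sum linear system has a one-dimensional solution space, so $H_2$ is generated by a single explicit element $x_0$ (Proposition~\ref{generator}). Your proposal offers no way to show $x_0=0$; saying relation (6) ``must be exhibited as a product of conjugates of (1)--(5)'' is a misreading---(6) is an independent relation, and what must be shown is that the specific combination $A^{-7}B_1^{-2}B_2^{-4}B_3^{-6}B_4^{4}B_5^{2}B^{12}C^2$ lies in $[F,N]$. The paper does not do this directly. Instead it cites Stein's theorem that $H_2(\operatorname{Sp}(2h,\mathbb Z_2);\mathbb Z)=0$ for $h\ge 3$ to settle the odd case, and for even $g\ge 10$ observes that the representative of $x_0$ involves only $a_1,\dots,a_6,b$ and hence lies in the image of the natural inclusion $\operatorname{Aut}(H_1(N_{g-1};\mathbb Z_2),\cdot)\hookrightarrow \operatorname{Aut}(H_1(N_g;\mathbb Z_2),\cdot)$; since the source has trivial $H_2$ by Stein, $x_0=0$. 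Your claim that the odd case is ``self-contained'' is thus incorrect: Stein's result is the essential external input, and the even case is reduced to it rather than handled by unwinding (9).
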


Theorem~\ref{h_2} was shown by Stein \cite{Stein} for odd $g$ (see Theorem 2.13 and Proposition 3.3 (a)). More precisely, Stein proved $H_2(\operatorname{Sp}(2h,\mathbb Z_m);\mathbb Z)=0$ when $h\geq 3$ and $m$ is not divisible by $4$(see also \cite{Funar-Pitsch}).

To prove Theorem~\ref{h_2}, we give a generating set for $H_2(\operatorname{Aut}(H_1(N_g;\mathbb Z_2),\cdot );\mathbb Z)$ which consists of one element $x_0$ by an application of the discussion of Pitsch \cite{Pitsch}. By using the generator of $H_2(\operatorname{Aut}(H_1(N_g;\mathbb Z_2),\cdot );\mathbb Z)$ and Stein's result we show that $x_0$ is trivial for $g\geq 9$. 

\section{Preliminaries}

Let $\alpha _1, \dots , \alpha _{g-1}, \beta $ be two-sided
simple closed curves on $N_g$ as in Figure~\ref{scc}. Arrows on the side of simple closed curves in Figure~\ref{scc} indicate directions of Dehn twists along their simple closed curves. Since the actions of the Dehn twists along $\alpha _1,\dots , \alpha_{g-1}, \beta $ induce $a_1,\dots , a_{g-1}, b$ on $H_1(N_g;\mathbb Z_2)$ respectively, we denote Dehn twists along $\alpha _1,\dots , \alpha _{g-1}, \beta $ by $a_1,\dots , a_{g-1}, b$ and abuse the notation.

\begin{figure}[h]
\includegraphics[scale=0.90]{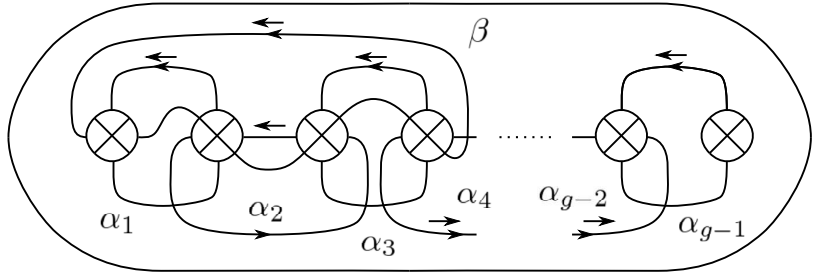}
\caption{Simple closed curves $\alpha _1,\dots , \alpha _{g-1}, \beta $ on $N_g$.}\label{scc}
\end{figure}

Let $\mu $ be a one-sided simple closed curve and $\alpha $ a two-sided simple closed curve such that $\mu $ and $\alpha $ intersect transversely in one point. For these simple closed curves $\mu$ and $\alpha $, we denote by $Y_{\mu , \alpha }$ a self-diffeomorphism on $N_g$ which is described as the result of pushing the regular neighborhood of $\mu $ once along $\alpha $ (see Figure~\ref{yhomeo}). We call $Y_{\mu , \alpha }$ a {\it Y-homeomorphism}. We set the direction of $Y_{\mu _i, \alpha _j} \ (1\leq i\leq g,\ 1\leq j\leq g-1)$ by the orientation of $\alpha _j$ in Figure~\ref{scc} and $y:=Y_{\mu _1, \alpha _1}$. Note that the action of Y-homeomorphism on $H_1(N_g;\mathbb Z_2)$ is trivial. 

\begin{figure}[h]
\includegraphics[scale=0.60]{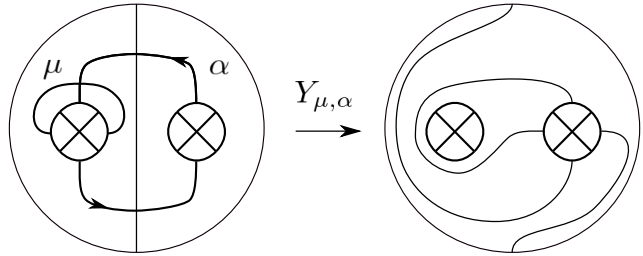}
\caption{The Y-homeomorphism on the regular neighborhood of $\mu \cup \alpha $.}\label{yhomeo}
\end{figure}

The {\it mapping class group} $\mathcal{M}(N_{g,n})$ of $N_{g,n}$ is the group of isotopy classes of self-diffeomorphisms on $N_g$ fixing each boundary component pointwise. Paris and Szepietowski \cite{Paris-Szepietowski} gave a finite presentation for $\mathcal{M}(N_g)$. The presentation has a generating set which consists of Dehn twists along two-sided simple closed curves and ``crosscap transpositions''.  Stukow \cite{Stukow1} obtained a finite presentation for $\mathcal{M}(N_g)$ whose generators are Dehn twists and a Y-homeomorphism. Stukow's presentation is the following.

\begin{thm}[\cite{Stukow1}]\label{Stukow}
If $g\geq 4$ is odd or $g=4$, then $\mathcal{M}(N_g)$ admits a presentation with generators $a_1,\dots , a_{g-1},b, y$ and $\rho $. The defining relations are
\begin{enumerate}
 \item[(A1)] $[a_i,a_j]=1$\hspace{1cm} for $|i-j|>1$,
 \item[(A2)] $a_ia_{i+1}a_i=a_{i+1}a_ia_{i+1}$\hspace{1cm} for $i=1,\dots ,g-2$,
 \item[(A3)] $[a_i,b]=1$\hspace{1cm} for $i\not=4$,
 \item[(A4)] $a_4ba_4=ba_4b$\hspace{1cm} for $g\geq 5$,
 \item[(A5)] $(a_2a_3a_4b)^{10}=(a_1a_2a_3a_4b)^6$\hspace{1cm} for $g\geq 5$,
 \item[(A6)] $(a_2a_3a_4a_5a_6b)^{12}=(a_1a_2a_3a_4a_5a_6b)^9$\hspace{1cm} for $g\geq 7$,
 \item[(B1)] $y(a_2a_3a_1a_2ya_2^{-1} a_1^{-1}a_3^{-1}a_2^{-1}) = (a_2a_3a_1a_2ya_2^{-1}a_1^{-1}a_3^{-1}a_2^{-1})y$,
 \item[(B2)] $y(a_2a_1y^{-1}a_2^{-1}ya_1a_2)y=a_1(a_2a_1y^{-1}a_2^{-1}ya_1a_2)a_1$,
 \item[(B3)] $[a_i,y]=1$\hspace{1cm} for $i=3,\dots ,g-1$,
 \item[(B4)] $a_2(ya_2y^{-1}) = (ya_2y^{-1})a_2$,
 \item[(B5)] $ya_1=a_1^{-1}y$,
 \item[(B6)] $byby^{-1} = \{a_1a_2a_3(y^{-1}a_2y)a_3^{-1}a_2^{-1}a_1^{-1} \}\{a_2^{-1}a_3^{-1}(ya_2y^{-1})a_3a_2\}$,
 \item[(B7)] $[(a_4a_5a_3a_4a_2a_3a_1a_2ya_2^{-1}a_1^{-1}a_3^{-1}a_2^{-1}a_4^{-1}a_3^{-1}a_5^{-1}a_4^{-1}),b] =1$\hspace{1.0cm} for $g\geq 6$,
 \item[(B8)] $\{(ya_1^{-1}a_2^{-1}a_3^{-1}a_4^{-1})b(a_4a_3a_2a_1y^{-1})\}\{(a_1^{-1}a_2^{-1}a_3^{-1}a_4^{-1})b^{-1}(a_4a_3a_2a_1)\}$
	     $=\{(a_4^{-1}a_3^{-1}a_2^{-1})y(a_2a_3a_4)\}\{a_3^{-1}a_2^{-1}y^{-1}a_2a_3\}\{a_2^{-1}ya_2\}y^{-1}$\hspace{0.9cm} for $g\geq 5$,
 \item[(C1a)] $(a_1a_2\cdots a_{g-1})^g=\rho $\hspace{1.0cm} for $g$ odd,
 \item[(C1b)] $(a_1a_2\cdots a_{g-1})^g=1$\hspace{1.0cm} for $g$ even,
 \item[(C2)] $[a_1,\rho ]=1$,
 \item[(C3)] $\rho ^2=1$,
 \item[(C4a)] $(y^{-1}a_2a_3\cdots a_{g-1}ya_2a_3\cdots a_{g-1})^{\frac{g-1}{2}}=1$\hspace{1.0cm} for $g$ odd,
 \item[(C4b)] $(y^{-1}a_2a_3\cdots a_{g-1}ya_2a_3\cdots a_{g-1})^{\frac{g-2}{2}}y^{-1}a_2a_3\cdots a_{g-1}=\rho $\hspace{1.0cm} for $g$ even,
\end{enumerate}
where $[X,Y]=XYX^{-1}Y^{-1}$.
If $g\geq 6$ is even then $\mathcal{M}(N_g)$ admits a presentation with generators $a_1,\dots , a_{g-1}, y, b,\rho $ and $b_0,\dots , b_{\frac{g-2}{2}}$. The defining relations are {\rm (A1)-(A6), (B1)-(B8), (C1a)-(C4b)} above and the following relations:
\begin{enumerate}
 \item[(A7)] $b_0=a_1,\ \ b_1=b$,
 \item[(A8)] $b_{i+1}=(b_{i-1}a_{2i}a_{2i+1}a_{2i+2}a_{2i+3}b_i)^5(b_{i-1}a_{2i}a_{2i+1}a_{2i+2}a_{2i+3})^{-6}$\\
	     for $1\leq i\leq \frac{g-4}{2}$,
 \item[(A9a)] $[b_2,b]=1$\hspace{1.0cm} for $g=6$,
 \item[(A9b)] $[a_{g-5},b_{\frac{g-2}{2}}]=1$\hspace{1.0cm} for $g\geq 8$.
\end{enumerate}
\end{thm}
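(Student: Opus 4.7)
My starting point would be the finite presentation of $\mathcal{M}(N_g)$ due to Paris and Szepietowski~\cite{Paris-Szepietowski}, whose generating set consists of the Dehn twists $a_1,\ldots,a_{g-1},b$ together with a collection of crosscap transpositions. The task is to modify this presentation by Tietze transformations so that the crosscap transpositions are replaced by the single Y-homeomorphism $y = Y_{\mu_1,\alpha_1}$ and the rotation $\rho$.

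First I would fix the conversion between the two generating sets. Each crosscap transposition $u$ swapping two adjacent crosscaps differs from the Y-homeomorphism supported in the same neighborhood (a M\"{o}bius band union an annulus) by a single Dehn twist along a two-sided curve, so $u$ is expressible as a short word in the $a_i$'s and a conjugate of $y$. Conversely, $y$ and $\rho$ are expressible as words in the $a_i$, $b$, and the crosscap transpositions. This gives the substitution needed to translate relations back and forth.

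Next I would verify that each of the Stukow relations actually holds in $\mathcal{M}(N_g)$. The type-(A) relations among the Dehn twists are standard disjointness, braid, and chain relations read off from Figure~\ref{scc}; (A5)--(A6) and the recursive definitions (A7)--(A9b) of the $b_i$ arise from iterated chain relations on chains of curves obtained by successive rotations. The type-(B) relations record how conjugation by $y$ and the $a_i$ transports the supporting curves $\mu_1,\alpha_1$ of $y$; each can be checked by drawing the resulting configuration and identifying the two sides as the same Y-homeomorphism (or as a product of two supported in disjoint subsurfaces). The type-(C) relations encode the rotation: $(a_1\cdots a_{g-1})^g$ equals the Dehn twist along the boundary of a tubular neighborhood of a long chain, which agrees with $\rho$ in the odd case and is trivial in the even case, while (C4a)--(C4b) express the same element in a different form via $y$.

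The final step is the Tietze reduction: enlarge the Paris--Szepietowski presentation by adjoining $y$, $\rho$ (and the $b_i$ in the even case) together with the defining relations expressing them as words in the old generators; rewrite every crosscap transposition as a word in $a_i,b,y,\rho,b_i$; and then delete those generators along with the rewritten old relations. Every Paris--Szepietowski relation must be shown to be a consequence of Stukow's relations after substitution, and vice versa. The main obstacle will be this verification for the most complex relations, namely (B6), (B7), (B8) and the chain identities (C4a)--(C4b): each involves a conjugation by a long product $a_{i_1}\cdots a_{i_k}$ whose effect on $\mu_1$ and $\alpha_1$ must be tracked step by step, and the resulting word then rewritten using the already-verified relations into the form required by Paris--Szepietowski. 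Managing this bookkeeping, and separately handling the nested chain relations that define the $b_i$ in the even case, is where the bulk of the work lies.
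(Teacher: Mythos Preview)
The paper does not prove this theorem: Theorem~\ref{Stukow} is quoted verbatim from Stukow~\cite{Stukow1} and used as a black box, so there is no ``paper's own proof'' to compare against. Your outline is in fact a sketch of Stukow's argument in~\cite{Stukow1}, which proceeds exactly as you describe: start from the Paris--Szepietowski presentation with Dehn twists and crosscap transpositions, introduce $y$ and $\rho$ via Tietze moves, rewrite the crosscap transpositions in terms of $y$ and the $a_i$, and then verify relation-by-relation that the old relations are consequences of the new ones. So as a proof plan there is nothing wrong with it; just be aware that for the purposes of this paper no proof is required, and if you do want to carry it out, the substantive content is entirely in the bookkeeping you flag at the end (the (B6)--(B8) and (C4) verifications), which in~\cite{Stukow1} occupies several pages of explicit word manipulations rather than any new idea.
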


Relations (A1) and (A3) are called {\it disjointness relations} and relations (A2) and (A4) are called {\it braid relations}. When we deform relations (or words) by disjointness relations and braid relations, we write ``DI'' and ``BR'' on the left-right arrow (or the equality sign) respectively.

\begin{figure}[h]
\includegraphics[scale=0.80]{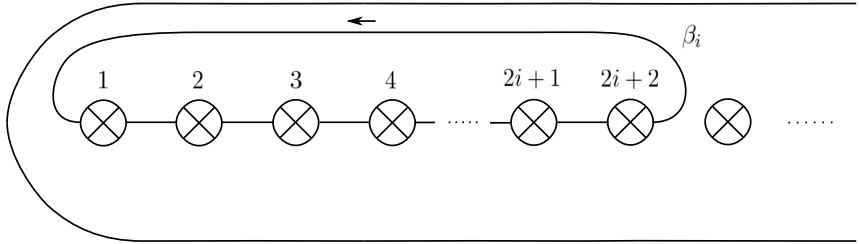}
\caption{Simple closed curves $\beta _i$ on $N_g$ for $2\leq i\leq \frac{g-2}{2}$.}\label{beta_i}
\end{figure}

\begin{figure}[h]
\includegraphics[scale=0.75]{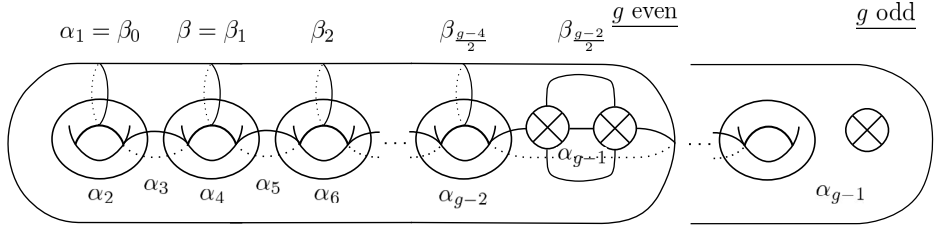}
\caption{A different view of simple closed curves $\alpha _i\ (1\leq i\leq g-1)$ and $\beta _j\ (0\leq i\leq \frac{g-2}{2})$ on $N_g$.}\label{different}
\end{figure}

$b_i\ (2\leq i\leq \frac{g-2}{2})$ in the $g$ even case of Theorem~\ref{Stukow} is the Dehn twist along a simple closed curve $\beta _i$ in Figure~\ref{beta_i}. The arrow on the side of the simple closed curve $\beta _i$ in Figure~\ref{beta_i} indicates the direction of the Dehn twist $b_i$. We note that $N_g$ is diffeomorphic to a surface as in Figure~\ref{different} and we can choose the diffeomorphism such that simple closed curves $\alpha _i\ (1\leq i\leq g-1)$ in Figure~\ref{scc} and $\beta _j\ (0\leq j\leq \frac{g-2}{2})$ in Figure~\ref{beta_i} are sent to a position in Figure~\ref{different}.

Since the action of $\mathcal{M}(N_g)$ on $H_1(N_g;\mathbb Z_2)$ preserves the mod $2$ intersection form $\cdot $, we have a homomorphism $\rho _2:\mathcal{M}(N_g)\rightarrow \operatorname{Aut}(H_1(N_g;\mathbb Z_2),\cdot )$. McCarthy and Pinkall \cite{Mccarthy-Pinkall} showed that $\rho _2$ is surjective. $\Gamma _2(N_g):=\operatorname{ker}\rho _2$ is called the {\it level $2$ mapping class group} of $\mathcal{M}(N_g)$. Szepietowski \cite{Szepietowski2} proved that $\Gamma _2(N_g)$ is generated by Y-homeomorphisms for $g\geq 2$. More precisely, Szepietowski showed the following theorem.

\begin{thm}[\cite{Szepietowski2}]\label{Szepietowski}

For $g\geq 2$, $\Gamma _2(N_g)$ is normally generated by $y$ in $\mathcal{M}(N_g)$.

\end{thm}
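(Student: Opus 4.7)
The plan is to reduce the statement to a change-of-coordinates argument, building on the already-stated fact that $\Gamma_2(N_g)$ is generated by the entire collection of Y-homeomorphisms $Y_{\mu,\alpha}$. Since the normal closure of $\{y\}$ in $\mathcal{M}(N_g)$ automatically contains $y^{-1}$ and all its conjugates, it suffices to show that for every one-sided simple closed curve $\mu$ and every two-sided simple closed curve $\alpha$ meeting $\mu$ transversely in exactly one point, there exists $f\in\mathcal{M}(N_g)$ sending the unoriented pair $(\mu_1,\alpha_1)$ to $(\mu,\alpha)$; then $fyf^{-1}=Y_{\mu,\alpha}^{\pm 1}$, and this element together with its inverse is enough to cover $Y_{\mu,\alpha}$ itself.

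To construct such an $f$, I would analyze the topological type of the configuration. A regular neighborhood $N(\mu\cup\alpha)$ of the standard pair is a Klein bottle with one open disk removed, i.e.\ a copy of $N_{2,1}$, with a single two-sided boundary curve; cutting $N_g$ along this curve produces $N_{2,1}$ and a complementary compact connected subsurface with one boundary component which, by an Euler characteristic and orientability count, is forced to be $N_{g-2,1}$ for $g\geq 3$ (and a disk for $g=2$, where the theorem is immediate as $\Gamma_2(N_2)$ is already cyclic). The exact same local model and complement are obtained for any other admissible pair $(\mu,\alpha)$. The classification of compact surfaces then gives a homeomorphism between the complements, which glues to an obvious homeomorphism of the $N_{2,1}$ pieces taking $\mu_1\cup\alpha_1$ to $\mu\cup\alpha$, and the result is a self-diffeomorphism $f$ of $N_g$ with the required property. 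This is the usual change-of-coordinates principle adapted to the non-orientable setting.

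Combining these two steps, every Y-homeomorphism lies in the normal closure of $y$, and hence so does $\Gamma_2(N_g)$; the reverse containment is trivial. The main obstacle is making the change-of-coordinates rigorous in the non-orientable context: one must carefully verify that $\partial N(\mu\cup\alpha)$ is always a single two-sided curve, that the complement is always of the claimed type, and that the two cut pieces can be glued by a genuine diffeomorphism of $N_g$ compatible with the boundary identification. The $\pm$ ambiguity in the Y-homeomorphism direction, by contrast, is not an obstacle because the normal closure of $\{y\}$ is closed under taking inverses.
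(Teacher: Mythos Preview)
The paper does not prove this theorem; it is quoted from \cite{Szepietowski2} without argument, so there is no proof in the paper to compare against.

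Your reduction has a genuine gap in the even-genus case. You assert that the complement of the regular neighborhood $N(\mu\cup\alpha)\cong N_{2,1}$ in $N_g$ is ``forced to be $N_{g-2,1}$'' by an Euler-characteristic and orientability count. The Euler characteristic of the complement is indeed $3-g$ and it has a single boundary component, but for even $g=2h+2\geq 4$ these data are equally consistent with the orientable surface $\Sigma_{h,1}$, and that possibility does occur: the decomposition $N_{2h+2}\cong N_{2,1}\cup_{\partial}\Sigma_{h,1}$ exists by the classification of closed surfaces, and the standard one-sided/two-sided pair sitting inside the $N_{2,1}$ piece furnishes a Y-homeomorphism whose complementary surface is orientable. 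Since the complement of the model pair $(\mu_1,\alpha_1)$ is $N_{g-2,1}$, which is non-orientable for $g\geq 3$, the two configurations lie in distinct $\mathcal{M}(N_g)$-orbits and no $f$ as you describe can exist for such $(\mu,\alpha)$.

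For odd $g$ your argument is fine, since then $1-2h=3-g$ has no integer solution and the complement is forced to be non-orientable, hence $N_{g-2,1}$. For even $g\geq 4$ you still need to show that a Y-homeomorphism with orientable complement lies in the normal closure of $y$; this is true, but it is not a consequence of change of coordinates for the pair $(\mu,\alpha)$ and requires an additional step (for instance, factoring such a $Y_{\mu,\alpha}$ through Y-homeomorphisms whose defining curves meet a further crosscap, so that the complements become non-orientable).
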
  

We note that squares of Dehn twists along non-separating two-sided simple closed curves are elements of $\Gamma _2(N_g)$. Hence $\{a_1^2, \dots, a_{g-1}^2, b^2, y \}$ is a normal generating set for $\Gamma _2(N_g)$ in $\mathcal{M}(N_g)$.

We now explain about the Tietze transformations. Let $G$ be a group with presentation $G=\bigl< X \big| R\bigr>$, where $X$ is a subset of $G$ and $R$ is a set consisting of words of elements of $X$. Then $G$ is isomorphic to the quotient group $F/K$, where $F$ is the free group which is generated by $X$ and $K$ is the normal subgroup of $F$ which is normally generated by $R$. Then the following transformations among presentations do not change the isomorphism class of $G$.

\begin{align*}
\bigl< X \big| R\bigr> &\longleftrightarrow \bigl< X \big| R\cup \{ k\} \bigr> \hspace{3.54cm} \text{for}\ k\in K-R, \\
&\longleftrightarrow \bigl< X\cup \{ v\} \big| R\cup \{vw^{-1} \} \bigr>  \hspace{2cm} \text{for}\ w\in F-X. \hspace{1cm} 
\end{align*}
These transformations are called the {\it Tietze transformations}. In this paper, we use these transformations without any comment when we deform presentations (or relations).

\section{Proof of Theorem ~\ref{thm-main} for $g\geq 4$}\label{proof-thm}

By the definition of $\Gamma _2(N_g)$ and surjectivity of $\rho _2:\mathcal{M}(N_g)\rightarrow \operatorname{Aut}(H_1(N_g;\mathbb Z_2),\cdot )$, we have the following short exact sequence.

\begin{eqnarray}\label{eq-Aut}
1\longrightarrow \Gamma _2(N_g)\longrightarrow \mathcal{M}(N_g)\stackrel{\rho _2}{\longrightarrow} \operatorname{Aut}(H_1(N_g;\mathbb Z_2),\cdot )\longrightarrow 1.
\end{eqnarray}

We have the finite presentation for $\mathcal{M}(N_g)$ (Theorem~\ref{Stukow}) and the normal generating set $\{a_1^2, \dots, a_{g-1}^2, b^2, y \}$ for $\Gamma _2(N_g)$ (Theorem~\ref{Szepietowski}). We can get a presentation for $\operatorname{Aut}(H_1(N_g;\mathbb Z_2),\cdot )$ by adding $\{a_1^2, \dots, a_{g-1}^2, b^2, y \}$ to the relations of the presentation for $\mathcal{M}(N_g)$ in Theorem~\ref{Stukow}. 

The relations $a_1^2= \cdots= a_{g-1}^2= b^2=1$ are nothing but relations (1) in Theorem~\ref{thm-main} clearly. 
By Claim~\ref{second_claim} and relations (1), we have
\begin{eqnarray*}
(a_1a_2a_3a_4a_5)^6
&=&(a_1^2a_2a_3a_4a_5)^5\\
&=&(a_2a_3a_4a_5)^5\\
&=&(a_2^2a_3a_4a_5)^4\\
&\vdots&\\
&=&a_5^2\\
&=&1.
\end{eqnarray*}
Hence we obtain the relation $b_2=(a_1a_2a_3a_4a_5b)^5$ in relation (7)
from relation (A8) for $i=1$.
Relations (2), (3), (4), (5) and (9) in Theorem~\ref{thm-main} are obtained from relations (A1), (A2), (A3),
(A4) and (A9b) in Theorem~\ref{Stukow}, and relations (1).

Since $y=1$ in $\operatorname{Aut}(H_1(N_g;\mathbb Z_2),\cdot )$, relations (B1), (B3), (B4), (B7), (B8) are unnecessary. By using relations (1) and braid relations (relations (A2), (A4) in Theorem~\ref{Stukow}), relations (B2), (B5), (B6) are deformed as follows.

\begin{eqnarray*}
\text{(B2)}&\stackrel{y=1\& \text{(1)}}{\Longleftrightarrow }&a_2a_1a_2a_1a_2=\underline{a_1a_2a_1}a_2\underline{a_1a_2a_1}\stackrel{\text{BR}}{\Longleftrightarrow }a_2a_1a_2a_1a_2=a_2a_1\underline{a_2a_2}a_2a_1a_2  \\ 
&\stackrel{\text{(1)}}{\Longleftrightarrow }&a_2a_1a_2a_1a_2=a_2a_1a_2a_1a_2. \\
\text{(B5)}&\stackrel{y=1}{\Longleftrightarrow }&a_1=a_1^{-1}\Longleftrightarrow \text{(1)}. \\
\text{(B6)}&\stackrel{y=1\& \text{(1)}}{\Longleftrightarrow }&1=a_1\underline{a_2a_3a_2}a_3a_2a_1a_2a_3\underline{a_2a_3a_2}\stackrel{\text{BR}}{\Longleftrightarrow }1=a_1a_3\underline{a_2a_3a_3a_2}a_1\underline{a_2a_3a_3a_2}a_3 \\
&\Longleftrightarrow& 1=a_1a_3a_1a_3\stackrel{\text{(A1)}\& \text{(1)}}{\Longleftrightarrow }1=a_1^2.
\end{eqnarray*}

Therefore relations (B1), (B2), \dots , (B8) drop out. 

It is sufficient for proof of this theorem to show the following three claims:

\begin{claim}\label{rho}

Relations {\rm (C1a)}, {\rm (C4b)} are equivalent to $\rho =1$ under $y=1$, relations {\rm (1)}, {\rm (BR)} and {\rm (DI)}. It allows you to rule out generator $\rho $ and relations {\rm (C1a)}, {\rm (C2)}, {\rm (C3)} and {\rm (C4b)} from the presentation.

\end{claim}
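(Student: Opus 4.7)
The plan is to reduce the claim to the standard fact that, modulo relations (1), (BR), and (DI), the generators $a_1, \ldots, a_{g-1}$ satisfy precisely the Coxeter presentation of the symmetric group $S_g$, so that the Coxeter element $a_1 a_2 \cdots a_{g-1}$ realises a single $g$-cycle and therefore
\[
(a_1 a_2 \cdots a_{g-1})^g = 1.
\]
An analogous identity $(a_2 a_3 \cdots a_{g-1})^{g-1} = 1$ holds in the subgroup isomorphic to $S_{g-1}$. Both identities can be derived purely from (1), (BR), (DI) by induction on $g$, using braid and disjointness moves to rewrite the iterated product.

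For $g$ odd, relation (C1a) reads $(a_1 a_2 \cdots a_{g-1})^g = \rho$, which by the identity above is equivalent to $\rho = 1$ under (1), (BR), (DI). For $g$ even, substituting $y = 1$ into (C4b) collapses the left-hand side to $(a_2 a_3 \cdots a_{g-1})^{g-2}\cdot a_2 a_3 \cdots a_{g-1}$, so that (C4b) becomes
\[
(a_2 a_3 \cdots a_{g-1})^{g-1} = \rho,
\]
which is likewise equivalent to $\rho = 1$ via the second identity.

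Once $\rho = 1$ is imposed, relations (C2) $[a_1, \rho] = 1$ and (C3) $\rho^2 = 1$ become tautologies, and a direct inspection shows that $\rho$ appears in no other relation of the presentation from Theorem~\ref{Stukow}. A Tietze transformation therefore eliminates the generator $\rho$ together with the relation $\rho = 1$, and this removes $\rho$ along with (C1a), (C2), (C3) in the odd case, or $\rho$ along with (C2), (C3), (C4b) in the even case, as claimed.

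The main obstacle is verifying the two symmetric-group identities directly from the Coxeter-type relations (1), (BR), (DI), without appealing to a black-box identification with $S_g$. This is classical but requires a careful inductive rewriting using braid moves; once that is in hand, the remaining bookkeeping via Tietze transformations is routine.
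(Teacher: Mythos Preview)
Your proposal is correct and follows essentially the same line as the paper. The paper supplies the ``careful inductive rewriting'' you flag as the main obstacle in the form of Claim~\ref{second_claim}, which shows $(g_1\cdots g_n)^{n+1}=(g_1^2 g_2\cdots g_n)^n$ from (BR) and (DI) alone; combined with relation~(1) this gives the step $(a_k\cdots a_{g-1})^{g-k+1}=(a_{k+1}\cdots a_{g-1})^{g-k}$ and hence your two symmetric-group identities by descent, after which the Tietze bookkeeping is exactly as you describe.
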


\begin{claim}\label{second_claim}

Let $G$ be a group and assume that $g_1, g_2, \dots , g_n\in G$ satisfy relations
\begin{enumerate}
 \item[(BR)] $g_ig_{i+1}g_i=g_{i+1}g_ig_{i+1}$\hspace{1cm} for $i=1,\dots ,n-1 $,
 \item[(DI)] $[g_i,g_j]=1$\hspace{2.76cm} for $|i-j|>1$ .
\end{enumerate}
Then we have a relation $(g_1 g_2 \cdots g_n)^{n+1}=(g_1^2 g_2 \cdots g_n)^n$ on $G$.

``{\rm BR}'' and ``{\rm DI}'' means braid relations and disjointness relations, respectively.

\end{claim}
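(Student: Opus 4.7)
The plan is to set $w = g_1 g_2 \cdots g_n$ and observe that the claimed identity can be rewritten as $w^{n+1} = (g_1 w)^n$, since $g_1^2 g_2 \cdots g_n = g_1 \cdot w$. So the goal becomes showing $(g_1 w)^n = w^{n+1}$, and the natural tool is a conjugation/telescoping argument using that $w$ cyclically permutes the generators under conjugation.

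The central lemma I would establish first is: for each $1 \le i \le n-1$,
\[
w g_i = g_{i+1} w.
\]
The proof is a direct word calculation in three moves, using only (BR) and (DI): starting from $w g_i = g_1 \cdots g_{i-1} \cdot g_i g_{i+1} g_{i+2} \cdots g_n \cdot g_i$, push the trailing $g_i$ leftward across $g_n, g_{n-1}, \ldots, g_{i+2}$ by (DI), apply the braid relation $g_i g_{i+1} g_i = g_{i+1} g_i g_{i+1}$ in the middle, and finally pull the resulting leftmost $g_{i+1}$ past $g_1, \ldots, g_{i-1}$ by (DI) to land at $g_{i+1} w$. Iterating this lemma gives the conjugation formula
\[
w^{i} g_1 w^{-i} = g_{i+1} \quad \text{for } 0 \le i \le n-1.
\]

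With the conjugation formula in hand, the telescoping is immediate:
\begin{align*}
(g_1 w)^n &= g_1 \cdot w g_1 w^{-1} \cdot w^2 g_1 w^{-2} \cdots w^{n-1} g_1 w^{-(n-1)} \cdot w^n \\
&= g_1 \cdot g_2 \cdot g_3 \cdots g_n \cdot w^n \\
&= w \cdot w^n = w^{n+1},
\end{align*}
which rewrites as $(g_1 g_2 \cdots g_n)^{n+1} = (g_1^2 g_2 \cdots g_n)^n$, as desired.

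The main obstacle is really just bookkeeping in the conjugation lemma $w g_i = g_{i+1} w$; everything else is formal. I would write that lemma out explicitly as a short subclaim so that the telescoping step becomes transparent. Once the lemma is in place, the structure of the argument (rewrite RHS as $(g_1 w)^n$, then regroup into conjugates and a trailing $w^n$) mirrors the classical proof that $\Delta = g_1 g_2 \cdots g_n$ cyclically permutes the standard generators in the braid group, so no further ingredients are needed.
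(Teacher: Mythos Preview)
Your proof is correct and rests on the same core move as the paper's: the shift identity $w g_i = g_{i+1} w$ (equivalently, a generator pushed through a full block $g_1\cdots g_n$ has its index shifted by one), obtained by one application of (DI), one of (BR), and one more of (DI). The paper carries this out as an explicit iterated word manipulation on $(g_1\cdots g_n)^{n+1}$, peeling letters off the leftmost block and pushing them rightward through the remaining blocks, whereas you isolate the shift identity once and then finish with the telescoping rewrite $(g_1 w)^n = g_1\,(wg_1w^{-1})\cdots(w^{n-1}g_1w^{-(n-1)})\,w^n = g_1g_2\cdots g_n\,w^n = w^{n+1}$; this is the same argument in a more compact packaging.
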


\begin{claim}\label{A9a}

Relation {\rm (A9a)} follows from relations {\rm(1), (2), (3), (4), (5)}.

\end{claim}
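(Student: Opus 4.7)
The plan is to recognise the group presented by $a_1,\dots,a_5,b$ and relations (1)--(5) as the finite Coxeter group $W(D_6)$, and then to identify $b_2=(a_1a_2a_3a_4a_5b)^5$ with its (central) longest element.

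First I would check that (1)--(5) are precisely the Coxeter relations of type $D_6$: the involutions $a_1,\dots,a_5$ form an $A_5$ chain via (1)--(3); the sixth involution $b$ commutes with every $a_i$ except $a_4$ by (4), and satisfies $(a_4b)^3=1$ by (5). The resulting diagram has three arms of lengths $3,1,1$ at the trivalent node $a_4$, which is the Dynkin diagram of $D_6$. Hence the presented group is $W(D_6)$. Its Coxeter number is $h=2(6-1)=10$, and the word $c:=a_1a_2a_3a_4a_5b$ is a product of all six simple reflections, so it is a Coxeter element.

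Next I would identify $c^5$ with the longest element $w_0\in W(D_6)$ by a direct computation in the standard reflection representation on $\mathbb{R}^6$. With $a_i$ acting by the swap $e_i\leftrightarrow e_{i+1}$ and $b$ sending $e_5\mapsto -e_6$ and $e_6\mapsto -e_5$, and reading the word right-to-left, one obtains
\[
c(e_i)=e_{i+1}\ (1\le i\le 4),\qquad c(e_5)=-e_1,\qquad c(e_6)=-e_6,
\]
so $c^5=-I$. Because $n=6$ is even, $-I$ lies in $W(D_6)$ and is central there; therefore $b_2=c^5$ is central, and $[b_2,b]=1$ follows at once.

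The main obstacle I anticipate is cosmetic rather than substantive: avoiding any appeal to Coxeter theory would force one to verify $[b_2,b]=1$ purely symbolically, by expanding $(a_1a_2a_3a_4a_5b)^5b$ and $b(a_1a_2a_3a_4a_5b)^5$ and reducing each using the braid $a_4ba_4=ba_4b$ together with (1) and the commutations in (2) and (4). Such a reduction is mechanical but combinatorially long, so the Coxeter-theoretic route, which reduces the claim to the identification of a Coxeter element's middle power with $-I$, is preferable.
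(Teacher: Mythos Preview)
Your argument is correct and takes a genuinely different route from the paper's.

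The paper proves the claim by an elementary word computation. It first establishes three ``shifting'' identities, valid under (1)--(5), describing how left multiplication by a generator interacts with the element $c=a_1a_2a_3a_4a_5b$: namely $a_ic=ca_{i-1}$ for $i=2,3,4$, together with $bc=ca_4a_5a_4$ and $a_5c=ca_4ba_4$. It then pushes $b$ through the five factors of $b_2=c^5$ one at a time using these identities, arriving at $bc^5=c^5b$ after a short chain of reductions that only uses (1), braid relations, and disjointness.

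Your approach is more structural: you observe that relations (1)--(5), restricted to $a_1,\dots,a_5,b$, are exactly the Coxeter presentation of $W(D_6)$, and then verify in the standard signed-permutation representation on $\mathbb{R}^6$ that $c^5=-I$, hence central. The one point you use implicitly and should state is that this representation is faithful (Tits' theorem, or the elementary count $|W(D_6)|=2^{5}\cdot 6!$, which matches the order of the concrete group of signed permutations with an even number of sign changes); faithfulness is what transfers the matrix identity $c^5b=bc^5$ back to the abstract group presented by (1)--(5).

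What each approach buys: the paper's proof is entirely self-contained within the presentation and uses nothing beyond the listed relators, which matches the elementary style of the section. Your proof is shorter, explains \emph{why} the relation holds (it is the centrality of the longest element in a type $D_{2k}$ Coxeter group), and would generalise immediately to analogous situations, at the price of importing a standard external fact about finite Coxeter groups. Both are valid; the paper's ``shifting'' identities are in fact the combinatorial shadow of your observation that $c$ is a Coxeter element.
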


We suppose that Claim~\ref{second_claim} and Claim~\ref{A9a} are true.

\begin{proof}[\bf{Proof of Claim~\ref{rho}}]

Since $y=1$ in $\operatorname{Aut}(H_1(N_g;\mathbb Z_2),\cdot )$ and $\operatorname{Aut}(H_1(N_g;\mathbb Z_2),\cdot )$ has relations (C1a) and (C4b), $\rho $ is represented by the form
\[
\hspace{0.5cm}
\rho  =\left\{ \begin{array}{ll}
 (a_1a_2\cdots a_{g-1})^g  \hspace{1cm} &\text{for} \ g \ \text{odd}, \\
 (a_2\cdots a_{g-1})^{g-1} \hspace{1cm} &\text{for} \ g \ \text{even}. \\
 \end{array} \right.
\]

We get $\rho =1$ by repeatedly applying Claim~\ref{second_claim} and relations (1) to the right-hand side of the above equation. For example, in $g$ odd case:

\begin{align*}
\rho & = (a_1a_2\cdots a_{g-1})^g \stackrel{\text{Claim}~\ref{second_claim}}{=}(a_2\cdots a_{g-1})^{g-1} \stackrel{\text{Claim}~\ref{second_claim}}{=}(a_3\cdots a_{g-2})^{g-2} \\
&\stackrel{\text{Claim}~\ref{second_claim}}{=}\cdots \stackrel{\text{Claim}~\ref{second_claim}}{=}a_{g-1}^2\stackrel{(1)}{=}1.
\end{align*}
Thus we obtain the claim.
\end{proof}

By Claim~\ref{rho}, relations (C2) and (C3) are unnecessary. By a discussion similar to the proof of Claim~\ref{rho}, relations (C1b) and (C4a) are unnecessary, too. Therefore relations (C1a), (C1b), (C2), (C3), (C4a), (C4b) drop out.
For relation (A5), we apply Claim~\ref{second_claim} as follows.
\begin{align*}
\text{(A5)} &\Longleftrightarrow (a_2a_3a_4b)^{10}=\underline{(a_1a_2a_3a_4b)^6}\stackrel{\text{Claim}~\ref{second_claim}}{\Longleftrightarrow }(a_2a_3a_4b)^{10}=(a_2a_3a_4b)^5 \\
&\Longleftrightarrow (a_2a_3a_4b)^5=1 \stackrel{\text{Claim}~\ref{second_claim}}{\Longleftrightarrow }\cdots \stackrel{\text{Claim}~\ref{second_claim}}{\Longleftrightarrow }b^2=1 \Longleftrightarrow (1).
\end{align*}

We have completed the proof of Theorem~\ref{thm-main} without proofs of Claim~\ref{second_claim} and Claim~\ref{A9a}.

\begin{proof}[\bf{Proof of Claim~\ref{second_claim}}]

\[
(g_1 g_2 \cdots g_n)^{n+1}=(g_1 g_2 \cdots g_n)(g_1 g_2 \cdots g_n)\cdots (g_1 g_2 \cdots g_n)
\]

Let $A_i\ (i=n+1,\ n,\ \dots ,\ 1)$ be the $i$-th sequence $(g_1 g_2 \cdots g_n)$ from the right in the right-hand side. By using disjointness relations and braid relations, the above equation is deformed as follows. 
\begin{eqnarray*} 
(g_1 g_2 \cdots g_n)^{n+1}&=&A_{n+1}A_n\cdots A_1\\
&=&(g_1 g_2 \cdots g_{n-1}\underline{g_n})A_nA_{n-1}\cdots A_1 \\
&\stackrel{\text{DI}}{=}&(g_1 g_2 \cdots g_{n-1})(g_1 g_2 \cdots g_{n-2}\underline{g_ng_{n-1}g_n})A_{n-1}\cdots A_1 \\
&\stackrel{\text{BR}}{=}&(g_1 g_2 \cdots g_{n-1})(g_1 g_2 \cdots g_n)g_{n-1}A_{n-1}\cdots A_1.
\end{eqnarray*}
We replace the first sequence $(g_1 g_2 \cdots g_n)$ from the left in the bottom with $A_n$.
Then we have
\begin{eqnarray*} 
(g_1 g_2 \cdots g_n)^{n+1}&=&(g_1 g_2 \cdots g_{n-1})A_n\underline{g_{n-1}}A_{n-1}\cdots A_1 \\
&\stackrel{\text{DI}}{=}&(g_1 g_2 \cdots g_{n-1})A_n(g_1 g_2 \cdots g_{n-3}\underline{g_{n-1}g_{n-2}g_{n-1}}g_n)A_{n-2}\cdots A_1 \\
&\stackrel{\text{BR}}{=}&(g_1 g_2 \cdots g_{n-1})A_n(g_1 g_2 \cdots g_{n-3}g_{n-2}g_{n-1}\underline{g_{n-2}}g_n)A_{n-2}\cdots A_1 \\
&\stackrel{\text{DI}}{=}&(g_1 g_2 \cdots g_{n-1})A_n(g_1 g_2 \cdots g_n)g_{n-2}A_{n-2}\cdots A_1.
\end{eqnarray*}
We replace the second sequence $(g_1 g_2 \cdots g_n)$ from the left in the bottom with $A_{n-1}$ and repeat it.
Then we have
\begin{eqnarray*} 
(g_1 g_2 \cdots g_n)^{n+1}&=&(g_1 g_2 \cdots g_{n-1})A_nA_{n-1}g_{n-2}A_{n-2}\cdots A_1 \\
&=&(g_1 g_2 \cdots g_{n-1})A_nA_{n-1}A_{n-2}g_{n-3}A_{n-3}\cdots A_1 \\
&\vdots & \\
&=&(g_1 g_2 \cdots g_{n-1})A_n\cdots A_2g_1A_1 \\
&=&(g_1 g_2 \cdots g_{n-2})A_ng_{n-2}A_{n-1}\cdots A_2g_1A_1 \\
&\vdots &\\
&=&(g_1 g_2 \cdots g_{n-2})A_n\cdots A_3g_1A_2g_1A_1 \\
&\vdots &\\
&=&g_1A_n\cdots g_1A_3g_1A_2g_1A_1 \\
&=&(g_1^2 g_2 \cdots g_n)^n.
\end{eqnarray*}
Thus we obtain the claim.
\end{proof}

\begin{proof}[\bf{Proof of Claim~\ref{A9a}}]

Note that $b_2=(a_1a_2a_3a_4a_5b)^5$. 
We first show the followings.
\begin{enumerate}
 \item[(a)] $a_i(a_1a_2a_3a_4a_5b)=(a_1a_2a_3a_4a_5b)a_{i-1}$ \hspace{2cm}\ \text{for}\ $i=2,\ 3,\ 4$.
 \item[(b)] $b(a_1a_2a_3a_4a_5b)=(a_1a_2a_3a_4a_5b)a_4a_5a_4$.
 \item[(c)] $a_5(a_1a_2a_3a_4a_5b)=(a_1a_2a_3a_4a_5b)a_4ba_4$.
\end{enumerate}
Relation (a) is obtained by an argument similar to the proof of Claim~\ref{second_claim}. The other relations are obtained by the following deformations.
\if0
\begin{align*}
\bullet \hspace{0.3cm} b(a_1a_2a_3a_4a_5b)&\stackrel{\text{DI}}{=}a_1a_2a_3\underline{ba_4b}a_5\stackrel{\text{BR}}{=}a_1a_2a_3a_4ba_4a_5\stackrel{(1)}{=}a_1a_2a_3a_4b(a_5\underline{a_5)a_4a_5} \\
&\stackrel{\text{BR}}{=}(a_1a_2a_3a_4a_5b)a_4a_5a_4 \\
\bullet \ a_5(a_1a_2a_3a_4a_5b)&\stackrel{\text{DI}}{=}a_1a_2a_3\underline{a_5a_4a_5}b\stackrel{\text{BR}}{=}a_1a_2a_3a_4a_5a_4b\stackrel{(1)}{=}a_1a_2a_3a_4a_5(b\underline{b)a_4b} \\
&\stackrel{\text{BR}}{=}(a_1a_2a_3a_4a_5b)a_4ba_4 \\
\end{align*}
\begin{align*}
\bullet &\ a_1(a_1a_2a_3a_4a_5b) \stackrel{(1)}{=}a_1(a_2\underline{a_2)a_1a_2}a_3a_4a_5b \stackrel{\text{BR}}{=}(a_1a_2)a_1a_2\underline{a_1}a_3a_4a_5b \\
&\stackrel{\text{DI}}{=}(a_1a_2)a_1a_2a_3a_4a_5ba_1 \\
&\stackrel{(1)}{=}(a_1a_2)a_1\underline{a_2a_3(a_2}a_2)a_4a_5ba_1\stackrel{\text{BR}}{=}(a_1a_2)a_1\underline{a_3}a_2a_3\underline{a_2}a_4a_5ba_1\\
&\stackrel{\text{DI}}{=}(a_1a_2a_3)a_1a_2a_3a_4a_5ba_2a_1 \\
&\stackrel{(1)}{=}(a_1a_2a_3)a_1a_2\underline{a_3a_4(a_3}a_3)a_5ba_2a_1\stackrel{\text{BR}}{=}(a_1a_2a_3)a_1a_2\underline{a_4}a_3a_4\underline{a_3}a_5ba_2a_1\\
&\stackrel{\text{DI}}{=}(a_1a_2a_3a_4)a_1a_2a_3a_4a_5ba_3a_2a_1 \\
&\stackrel{(1)}{=}(a_1a_2a_3a_4)a_2a_3\underline{a_4a_5(a_4}a_4)ba_3a_2a_1\stackrel{\text{BR}}{=}(a_1a_2a_3a_4)a_2a_3\underline{a_5}a_4a_5a_4ba_3a_2a_1\\
&\stackrel{\text{DI}}{=}(a_1a_2a_3a_4a_5)a_2a_3a_4a_5a_4ba_3a_2a_1 \\
&\stackrel{(1)}{=}(a_1a_2a_3a_4a_5(b\underline{b}))a_1a_2a_3a_4a_5a_4ba_3a_2a_1\stackrel{\text{DI}}{=}(a_1a_2a_3a_4a_5b)a_1a_2a_3\underline{\underline{ba_4a_5a_4b}}a_3a_2a_1 \\
&\stackrel{(\star )}{=}(a_1a_2a_3a_4a_5b)a_1a_2a_3\underline{\underline{a_4a_5a_4ba_4a_5a_4}}a_3a_2a_1
\end{align*}
\fi
\begin{enumerate}
 \item[(b)] $b(a_1a_2a_3a_4a_5b)\stackrel{\text{DI}}{=}a_1a_2a_3\underline{ba_4b}a_5\stackrel{\text{BR}}{=}a_1a_2a_3a_4ba_4a_5\stackrel{(1)}{=}a_1a_2a_3a_4b(a_5\underline{a_5)a_4a_5}$ \\
$\stackrel{\text{BR}}{=}(a_1a_2a_3a_4a_5b)a_4a_5a_4$.
 \item[(c)] $a_5(a_1a_2a_3a_4a_5b)\stackrel{\text{DI}}{=}a_1a_2a_3\underline{a_5a_4a_5}b\stackrel{\text{BR}}{=}a_1a_2a_3a_4a_5a_4b\stackrel{(1)}{=}a_1a_2a_3a_4a_5(b\underline{b)a_4b}$ \\
$\stackrel{\text{BR}}{=}(a_1a_2a_3a_4a_5b)a_4ba_4$.
\end{enumerate}
%

\if0
\begin{align*}
ba_4a_5a_4b&\stackrel{(1)}{=}ba_4a_5(bb)a_4b\stackrel{\text{DI}}{=}\underline{ba_4b}a_5\underline{ba_4b}\stackrel{\text{BR}}{=}a_4b\underline{a_4a_5a_4}ba_4\stackrel{\text{BR}}{=}a_4ba_5a_4a_5ba_4 \\
&\stackrel{\text{DI}}{=}a_4a_5\underline{ba_4b}a_5a_4\stackrel{\text{BR}}{=}a_4a_5a_4ba_4a_5a_4.
\end{align*}
\fi

We now prove $bb_2=b_2b$ by using only relations (a), (b), (c), 
(1) and disjointness relations. It means the relation $bb_2=b_2b$ is unnecessary.
\begin{eqnarray*}
bb_2&=&b(a_1a_2a_3a_4a_5b)^5\\
&\stackrel{\text{(b)}}{=}&(a_1a_2a_3a_4a_5b)a_4a_5a_4(a_1a_2a_3a_4a_5b)^4 \\
&\stackrel{\text{(a),(c)}}{=}&(a_1a_2a_3a_4a_5b)^2a_3a_4ba_4a_3(a_1a_2a_3a_4a_5b)^3 \\
&\stackrel{\text{(a),(b)}}{=}&(a_1a_2a_3a_4a_5b)^3a_2a_3a_4a_5a_4a_3a_2(a_1a_2a_3a_4a_5b)^2 \\
&\stackrel{\text{(a),(c)}}{=}&(a_1a_2a_3a_4a_5b)^4a_1a_2a_3a_4b\underline{a_4a_3a_2a_1(a_1a_2a_3a_4}a_5b) \\
&\stackrel{(1)}{=}&(a_1a_2a_3a_4a_5b)^4a_1a_2a_3a_4\underline{ba_5}b \\
&\stackrel{\text{DI}}{=}&(a_1a_2a_3a_4a_5b)^5b\\
&=&b_2b.
\end{eqnarray*}
Thus we obtain the claim.
\end{proof}

\section{The second homology group of $\operatorname{Aut}(H_1(N_g;\mathbb Z_2),\cdot )$}\label{H_1andH_2}

In this section, we prove Theorem~\ref{h_2}.
First, we obtain a generating set for $H_2(\operatorname{Aut}(H_1(N_g;\mathbb Z_2),\cdot );\mathbb Z)$ when $g\geq 9$ by using the Hopf formula and applying the discussion of Pitsch \cite{Pitsch}. More precisely, we obtain the following proposition.

\begin{prop}\label{generator}
For $g\geq 9$, $H_2(\operatorname{Aut}(H_1(N_g;\mathbb Z_2),\cdot
 );\mathbb Z)$ is generated by one element $x_0$. $x_0$ is represented by the following element:
\[
A^{-7}B_1^{-2}B_2^{-4}B_3^{-6}B_4^{4}B_5^{2}B^{12}C^2, 
\]
where $A$, $B_i$ $(i=1,\dots,5)$, $B$ and $C$ are the followings.
\begin{eqnarray*}
A&:=&b^2,\\
B_i&:=&a_ia_{i+1}a_ia_{i+1}^{-1}a_i^{-1}a_{i+1}^{-1},\\
B&:=&ba_4ba_4^{-1}b^{-1}a_4^{-1},\\
C&:=&(a_2a_3a_4a_5a_6b)^6(a_2^{-1}a_3^{-1}a_4^{-1}a_5^{-1}a_6^{-1}b^{-1})^6(a_1a_2a_3a_4a_5a_6b)^{-4}\\
&&\cdot(a_1^{-1}a_2^{-1}a_3^{-1}a_4^{-1}a_5^{-1}a_6^{-1}b^{-1})^{-5}.
\end{eqnarray*}
\end{prop}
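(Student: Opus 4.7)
The plan is to apply Hopf's formula $H_2(G;\mathbb{Z}) \cong (R\cap[F,F])/[F,R]$ to the presentation of $G := \operatorname{Aut}(H_1(N_g;\mathbb Z_2),\cdot)$ from Theorem~\ref{thm-main}, adapting Pitsch's method~\cite{Pitsch}. Let $F$ be the free group on $a_1,\dots,a_{g-1},b$; for even $g$ I first use the Tietze transformations coming from (7)--(8) to eliminate $b_0,\dots,b_{(g-2)/2}$, so that (9) becomes a relation in the $a_i$ and $b$. Let $R\triangleleft F$ be the normal closure of the resulting relators, so $G=F/R$. Since $F$-conjugation is trivial on $R/[F,R]$, this quotient is abelian and generated by the relators themselves, and because $H_1(G)=0$ for $g\geq 9$ the five-term exact sequence identifies $H_2(G)$ with $\ker\Phi$, where $\Phi\colon R/[F,R]\to F^{ab}\cong\mathbb{Z}^g$ is the abelianisation map.

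First I would compute $\Phi$ on each relator: $a_i^2\mapsto 2[a_i]$, $b^2\mapsto 2[b]$, $(a_ia_j)^2\mapsto 2[a_i]+2[a_j]$, $(a_ib)^2\mapsto 2[a_i]+2[b]$, while rewriting the braid relators $(a_ia_{i+1})^3$ and $(a_4b)^3$ in the commutator-like forms $B_i$ and $B$ of the statement (using the squares to do the rewriting) gives $B_i\mapsto[a_i]-[a_{i+1}]$ and $B\mapsto[b]-[a_4]$. The long relation~(6) is rewritten in the symmetric four-factor form $C$ from the statement, using Claim~\ref{second_claim} to split the word; its image is $C\mapsto[a_1]+\cdots+[a_6]+[b]$.

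The main substantive step is to reduce the generating set of $R/[F,R]$ via Peiffer-type identities, each certified by an explicit product of commutators $[f,r]$ with $f\in F$, $r\in R$. The disjointness relators $(a_ia_j)^2$ with $|i-j|>1$ and $(a_ib)^2$ with $i\ne 4$ are shown to be $\equiv a_i^2+a_j^2$ and $\equiv a_i^2+b^2$ respectively (using the commutation of $a_i,a_j$ or $a_i,b$ in $G$); for even $g$, a $[F,R]$-level version of the proof of Claim~\ref{A9a} disposes of the relator from (9); and additional identities among conjugates of the $B_i$ and $B$ coming from the other relations of Theorem~\ref{Stukow} are exploited to shrink the generating set to the point where the integer kernel of the resulting matrix is cyclic. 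Solving $\sum_r\lambda_r\Phi([r])=0$ over $\mathbb{Z}$ then produces the unique (up to sign) integer combination $-7A-2B_1-4B_2-6B_3+4B_4+2B_5+12B+2C$, whose lift to $F$ is $x_0$; the identity $\Phi(x_0)=0$ is verified row-by-row on the generators $[a_1],\dots,[a_{g-1}],[b]$ of $F^{ab}$.

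The principal obstacle is the Peiffer-reduction step, where each identity requires an explicit $[F,R]$-witness; handling relation~(6) is especially delicate, and is precisely why the symmetric element $C$ (rather than the raw relator $(a_2\cdots a_6b)^{12}(a_1\cdots a_6b)^{-9}$) appears in the formula for $x_0$ --- Claim~\ref{second_claim} is essential here. The hypothesis $g\geq 9$ is used to guarantee that the reduced integer linear system is large and non-degenerate enough to force the coefficients of the "higher" relators ($\lambda_{A_i},\lambda_{B_i}$ for $i\geq 7$) to vanish and to pin down $\ker\Phi$ as cyclic with $x_0$ as a generator.
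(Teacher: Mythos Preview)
Your overall architecture (Hopf's formula, then a linear system on exponent sums) matches the paper, but the decisive step---eliminating the disjointness relators and the relator coming from (9)---is not justified, and the justification you sketch does not work. You assert that $(a_ia_j)^2\equiv a_i^2+a_j^2$ in $R/[F,R]$ ``using the commutation of $a_i,a_j$ in $G$''. But in $F$ one has $(a_ia_j)^2=a_i[a_j,a_i]a_i^{-1}\cdot a_i^2a_j^2$, so your claim is equivalent to $[a_i,a_j]\in[F,R]$, and this is \emph{not} a formal consequence of $[a_i,a_j]\in R$. A minimal counterexample: for $G=\langle a,b\mid a^2,b^2,[a,b]\rangle\cong\mathbb Z_2\times\mathbb Z_2$ one has $H_2(G;\mathbb Z)\cong\mathbb Z_2$, generated precisely by the class of $[a,b]$; here $a$ and $b$ commute in $G$ and $a^2,b^2\in R$, yet $[a,b]\notin[F,R]$. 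The same objection applies to your handling of relation (9): Claim~\ref{A9a} is about $[b,b_2]$ for $g=6$, not $[a_{g-5},b_{\frac{g-2}{2}}]$ for $g\geq 8$, and in any case a redundancy argument would only rewrite the relator as a combination of the others in $R/[F,R]$, not annihilate it.

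The paper closes this gap by a geometric argument (Lemma~\ref{commutator}): for disjoint non-separating two-sided simple closed curves $\alpha,\beta$ on $N_g$ one has $\{t_\alpha,t_\beta\}=0$ in $H_2(\mathcal T(N_g);\mathbb Z)$. The proof cuts along $\alpha$ and uses Harer's (orientable) and Stukow's (non-orientable) vanishing of $H_1$ of the mapping class group of the cut surface to write $t_\beta$ as a product of commutators of elements commuting with $t_\alpha$, which forces $\{t_\alpha,t_\beta\}=0$. Pushing forward along $\rho_2|_{\mathcal T(N_g)}$ kills all the $D_{i,j}=[a_i,a_j]$ and $D=[a_{g-5},b_{\frac{g-2}{2}}]$ in $H_2(\operatorname{Aut}(H_1(N_g;\mathbb Z_2),\cdot);\mathbb Z)$. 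This is exactly where $g\geq 9$ enters (so that the cut surface has genus at least $7$, respectively $4$), not in the linear algebra as you suggest. The paper also first observes, via the braid relators, that each $a_i^2$ is conjugate to $b^2$, so a single relator $A=b^2$ suffices; without this reduction your linear system has the extra unknowns attached to $a_1^2,\dots,a_{g-1}^2$ and does not collapse to the $7\times 8$ system that yields the one-dimensional kernel generated by $x_0$.
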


Now we recall the classical Hopf formula. Let $G$ be a group with finite presentation $G=\bigl< X \big| R\bigr>$, where $X$ is a finite subset of $G$ and $R$ is a finite set consisting of words of the elements of $X$. Then $G$ is isomorphic to the quotient group $F/K$, where $F$ is the free group which is generated by $X$ and $K$ is the normal subgroup of $F$ which is normally generated by $R$. The classical {\it Hopf formula} states that
\[
H_2(G;\mathbb Z)\cong \frac {K\cap [F,F]}{[K,F]}  .
\]
We remark that $(K\cap [F,F])/[K,F]$ is an abelian group and any element of $(K\cap [F,F])/[K,F]$ is represented by a product of commutators of elements of $F$ and by a product of conjugations of elements of $R$ on $F$. Since $fkf^{-1}\equiv k$ in $K/[K,F]$ for any $f\in F$ and $k\in K$, every element of $H_2(G;\mathbb Z)$ is represented by $\Pi \ r_i^{n_i}$, where $R=\{ r_1, \dots , r_N\}$ and $n_i\in \mathbb Z$.

We modify the presentation for $\operatorname{Aut}(H_1(N_g;\mathbb Z_2),\cdot )$ for $g\geq 9$ in Theorem~\ref{thm-main} to apply the Hopf formula to $\operatorname{Aut}(H_1(N_g;\mathbb Z_2),\cdot )$ easily. 

At first we easily know that $\operatorname{Aut}(H_1(N_g;\mathbb Z_2),\cdot )$ admits a presentation with generators $a_0, a_1, \dots , a_{g-1}$ and relators:

\begin{enumerate}
 \item[(1)] $a_i^2$\hspace{1.0cm} for $i=0,\dots ,g-1$,
 \item[(2)] $[a_i,a_j]$\hspace{1.0cm} for ``$j-i>1$ and $i\not=0$'' or ``$i=0$ and $j\not=4$'', 
 \item[(3)] $a_ia_{i+1}a_ia_{i+1}^{-1}a_i^{-1}a_{i+1}^{-1}$\hspace{1.0cm} for $ i=1,\dots ,g-2$ \\
$a_0a_4a_0a_4^{-1}a_0^{-1}a_4^{-1}$, 
 \item[(4)] $(a_2a_3a_4a_5a_6a_0)^6(a_2^{-1}a_3^{-1}a_4^{-1}a_5^{-1}a_6^{-1}a_0^{-1})^6(a_1a_2a_3a_4a_5a_6a_0)^{-4}(a_1^{-1}a_2^{-1}a_3^{-1}\\ a_4^{-1}a_5^{-1}a_6^{-1}a_0^{-1})^{-5}$,
 \item[(5)] $[a_{g-5},b_{\frac{g-2}{2}}]$\hspace{1.0cm} for $g\geq 8$ even,
\end{enumerate}
where $a_0=b$ and $b_{\frac{g-2}{2}}$ is inductively defined as follows:
$b_1=a_0$, $b_2=(a_1a_2a_3a_4a_5b)^5$, 
$b_{i+1}=(b_{i-1}a_{2i}a_{2i+1}a_{2i+2}a_{2i+3}b_i)^5(b_{i-1}a_{2i}a_{2i+1}a_{2i+2}a_{2i+3})^{-6}$ for $2\leq i\leq \frac{g-4}{2}$. 

\begin{lem}

In the above presentation, relators $a_1^2,\dots , a_{g-1}^2$ in {\rm(1)} are unnecessary.

\end{lem}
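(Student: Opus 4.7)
The plan is to propagate the single relator $a_0^2$ along the braid relators in (3) to conclude that $a_i^2=1$ for every $i=1,\dots,g-1$. The underlying principle is that a braid relation forces the two generators involved to be conjugate, and hence forces their squares to be conjugate as well; starting from the one generator $a_0=b$ whose square is already a relator, this conjugacy chain will reach every remaining generator.

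First I would rewrite each relator of type (3), namely $a_ia_{i+1}a_ia_{i+1}^{-1}a_i^{-1}a_{i+1}^{-1}$, as the equivalent standard braid relation $a_ia_{i+1}a_i=a_{i+1}a_ia_{i+1}$. Setting $c_i:=a_{i+1}a_ia_{i+1}$ and substituting the braid relation once in the middle of $c_ia_ic_i^{-1}$, a short direct calculation gives $c_ia_ic_i^{-1}=a_{i+1}$ in the quotient by the type-(3) relators alone. Consequently $a_{i+1}^2=c_i a_i^2 c_i^{-1}$, so $a_i^2$ and $a_{i+1}^2$ are conjugate in this quotient; in particular, $a_i^2=1$ if and only if $a_{i+1}^2=1$. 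The same manipulation applied to the extra relator $a_0a_4a_0a_4^{-1}a_0^{-1}a_4^{-1}$ of (3) shows that $a_0^2$ and $a_4^2$ are conjugate.

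Using this, I would first deduce $a_4^2=1$ from $a_0^2=1$ via the $(a_0,a_4)$-braid relator; then $a_3^2=a_5^2=1$ from the braid relators for $(a_3,a_4)$ and $(a_4,a_5)$; and then iterate along the braid chain $a_1-a_2-\cdots-a_{g-1}$ to obtain $a_i^2=1$ for every $i=1,\dots,g-1$. Since the braid graph on $\{a_0,a_1,\dots,a_{g-1}\}$ is connected (it is the path $a_1-a_2-\cdots-a_{g-1}$ with the additional edge $a_0-a_4$), every $a_i$ with $i\geq 1$ is eventually reached. Therefore the relators $a_1^2,\dots,a_{g-1}^2$ are consequences of the remaining relators and may be deleted by a Tietze transformation.

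The argument is entirely routine and I do not anticipate any real obstacle: the only computation requiring care is the single conjugacy identity $c_ia_ic_i^{-1}=a_{i+1}$, which is a one-line braid manipulation, and everything else is a connectivity argument on the braid graph.
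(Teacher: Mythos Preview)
Your proposal is correct and essentially the same as the paper's proof: both show, using only the braid relators of type~(3), that each $a_i$ is conjugate to $a_0$ along the connected braid graph, so each $a_i^2$ is a conjugate of $a_0^2$. The only cosmetic difference is the choice of conjugating element---the paper reads $a_{i+1}=(a_ia_{i+1})a_i(a_ia_{i+1})^{-1}$ directly off the relator, whereas you use $c_i=a_{i+1}a_ia_{i+1}$---but this is immaterial.
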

   
\begin{proof}

By the relators (3), we can write $a_1, \dots , a_{g-1}$ as conjugations of $a_0$ in $\operatorname{Aut}(H_1(N_g;\mathbb Z_2),\cdot )$ inductively, as follows.
\begin{align*}
&\hspace{3cm}a_4=a_0a_4a_0a_4^{-1}a_0^{-1}, \\
&a_5=a_4a_5a_4a_5^{-1}a_4^{-1}, \hspace{3cm} a_3=a_4a_3a_4a_3^{-1}a_4^{-1}, \\
&a_6=a_5a_6a_5a_6^{-1}a_5^{-1}, \hspace{3cm} a_2=a_3a_2a_3a_2^{-1}a_3^{-1}, \\
&\hspace{0.52cm} \vdots  \hspace{5.52cm} a_1=a_2a_1a_2a_1^{-1}a_2^{-1}. \\
&a_{g-1}=a_{g-2}a_{g-1}a_{g-2}a_{g-1}^{-1}a_{g-2}^{-1}, \hspace{4.5cm} 
\end{align*} 

Thus $a_1^2,\ \cdots ,\ a_{g-1}^2$ are cojugations of $a_0^2$ in $\operatorname{Aut}(H_1(N_g;\mathbb Z_2),\cdot )$.

\end{proof}

We set
\begin{eqnarray*}
A&:=&a_0^2,\\
D_{i,j}&:=&[a_i,a_j],\\
B_i&:=&a_ia_{i+1}a_ia_{i+1}^{-1}a_i^{-1}a_{i+1}^{-1},\\
B&:=&a_0a_4a_0a_4^{-1}a_0^{-1} a_4^{-1},\\
C&:=&(a_2a_3a_4a_5a_6a_0)^6(a_2^{-1}a_3^{-1}a_4^{-1}a_5^{-1}a_6^{-1}a_0^{-1})^6(a_1a_2a_3a_4a_5a_6a_0)^{-4}\\
&&(a_1^{-1}a_2^{-1}a_3^{-1}a_4^{-1}a_5^{-1}a_6^{-1}a_0^{-1})^{-5},\\
D&:=&[a_{g-5},b_{\frac{g-2}{2}}].
\end{eqnarray*}
Then any element $x$ of $H_2(\operatorname{Aut}(H_1(N_g;\mathbb Z_2),\cdot );\mathbb Z)$ is represented by 
\[
x=A^n\Biggl( \prod _{(\ast )} D_{i,j}^{n_{i,j}}\Biggr) \Biggl( \prod _{i=1}^{g-2}B_i^{m_i}\Biggr) B^mC^lD^{l^{\prime }},
\]
where $n,\ n_{i,j},\ m_i,\ m,\ l,\ l^{\prime }\in \mathbb Z$ and
$(\ast )$ means the condition ``$j-i>1$ and $i\not=0$'' or ``$i=0$ and $j\not=4$''.

\begin{defn}\label{defcommutator}
Let $G$ and $F$ be groups which are given in the Hopf formula. For $g,h\in F$ such that $[g,h]=1$ in $G$, we denote by $\{g,h\}$ the equivalence class of the commutator $[g,h]\in [F,F]$ in $H_2(G;\mathbb Z)$. 
\end{defn}
Korkmaz and Stipsicz \cite[Lemma 3.3]{Korkmaz-Stipsicz} give the following relations in $H_2(G;\mathbb Z)$. For $g, h, k\in G$ such that $g$ commute with $h$ and $k$,
\begin{align*}
&(I)\ \ \{ g, hk\}=\{ g, h\} +\{ g, k\}, \\
&(I\hspace{-.25em}I )\ \ \{ g, h^{-1}\} =-\{ g,h\}.
\end{align*}
Note that relation (I) is obtained from relation (I\hspace{-.1em}I).

Let $\mathcal T(N_{g,n})$ be the subgroup of $\mathcal M(N_{g,n})$ generated by all Dehn twists and $\mathcal M(\Sigma _{g,n})$ the mapping class group of a compact connected orientable surface $\Sigma _{g,n}$ of genus $g$ with $n$ boundary components (i.e. $\mathcal M(\Sigma _{g,n})$ is the group of isotopy classes of orientation preserving self-diffeomorphisms on $\Sigma _{g,n}$ which fix each boundary component pointwise).  
 
\begin{lem} \label{commutator}

Let $g\geq 9$. If $\alpha $ and $\beta $ are disjoint non-separating two-sided simple closed curves on $N_g$ then $\{ t_\alpha ,t_\beta \}=0$ in $H_2(\mathcal T(N_g);\mathbb Z)$, where $t_\alpha $ and $t_\beta $ are Dehn twists along simple closed curves $\alpha $ and $\beta $ respectively. 

\end{lem}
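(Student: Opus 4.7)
The plan is to realize the commutator class $\{t_\alpha,t_\beta\}$ as the image, under an orientable subsurface inclusion, of a class that is known to vanish in $H_2$ of an orientable mapping class group.

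First, by the topological classification of pairs of disjoint two-sided simple closed curves on $N_g$, the action of $\mathcal{T}(N_g)$ (via conjugation) on such pairs has only finitely many orbits, each determined by the homeomorphism type of $N_g\setminus(\alpha\cup\beta)$. Since $\{\cdot,\cdot\}$ is invariant under simultaneous conjugation of its two arguments, I would reduce to checking the statement on a single representative $(\alpha,\beta)$ in each orbit.

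Second, for each orbit, I would exhibit an orientable subsurface $\Sigma\subset N_g$ containing $\alpha\cup\beta$. A regular neighborhood $\mathcal{N}(\alpha\cup\beta)$ is a disjoint union of two annuli (both curves are two-sided), hence already orientable; I would then enlarge it by adjoining orientable handles taken from the remaining genus of $N_g$. The assumption $g\geq 9$ is used here to guarantee that $\Sigma$ can be chosen of genus at least $3$ in every case, with $\alpha$ and $\beta$ still non-separating in $\Sigma$.

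Third, the inclusion $\iota\colon\Sigma\hookrightarrow N_g$ induces a homomorphism $\iota_\ast\colon\mathcal{M}(\Sigma)\to\mathcal{T}(N_g)$ (since $\mathcal{M}(\Sigma)$ is generated by Dehn twists for orientable $\Sigma$, and Dehn twists map to Dehn twists). Naturality of the Hopf formula then gives
\[
\{t_\alpha,t_\beta\}_{\mathcal{T}(N_g)} \;=\; \iota_\ast\bigl(\{t_\alpha,t_\beta\}_{\mathcal{M}(\Sigma)}\bigr).
\]
Applying the bilinearity relations (I) and (II) of Korkmaz--Stipsicz to a suitable lantern relation on $\Sigma$ then yields $\{t_\alpha,t_\beta\}_{\mathcal{M}(\Sigma)}=0$ in the orientable case, whence the lemma.

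The main obstacle will be Step~2, which is a topological case analysis: depending on whether $\alpha\cup\beta$ separates $N_g$, on whether $\alpha\cup\beta$ is homologically dependent or not, and on the orientability type of each complementary piece, I have to verify case by case that an orientable subsurface of genus at least $3$ containing $\alpha\cup\beta$ really embeds in $N_g$. It is precisely to cover the extremal types (for instance, when one complementary component absorbs almost all of the crosscaps) that the hypothesis $g\geq 9$ is needed.
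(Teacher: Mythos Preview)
Your approach differs substantially from the paper's, and it has a real gap in Step~3.  The paper does not embed $\alpha\cup\beta$ in an orientable subsurface at all; instead it cuts $N_g$ along $\alpha$, obtaining a surface $S$ which is either $\Sigma_{(g-2)/2,2}$ (if $g$ is even and the complement of $\alpha$ is orientable) or $N_{g-2,2}$ (otherwise).  The hypothesis $g\geq 9$ is used only to ensure that $H_1(\mathcal M(S);\mathbb Z)=0$ in the first case (Harer) and $H_1(\mathcal T(S);\mathbb Z)=0$ in the second (Stukow, which needs genus $\geq 7$).  Then $t_\beta$, viewed as an element of that group, is a product of commutators $\prod_i[X_i,Y_i]$ with each $X_i,Y_i$ supported off $\alpha$ and hence commuting with $t_\alpha$; the bilinearity relations~(I) and~(II) immediately give $\{t_\alpha,t_\beta\}=\sum_i\{t_\alpha,[X_i,Y_i]\}=0$.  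There is no case analysis on the pair $(\alpha,\beta)$ and no orientable subsurface is needed.

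Your Step~3 does not go through as written: the lantern relation together with bilinearity only yields a linear relation among several classes of the form $\{t_\gamma,t_\delta\}$; it does not force any single such class to be zero.  The standard proof that $\{t_\alpha,t_\beta\}=0$ in $H_2(\mathcal M(\Sigma);\mathbb Z)$ for disjoint curves on a high-genus orientable $\Sigma$ is exactly the ``cut along one curve and use $H_1=0$'' argument above, so even if Step~2 were carried out you would end up reproducing the paper's method inside $\Sigma$---the detour through an orientable subsurface buys nothing.  Moreover, Step~2 itself (placing every pair of disjoint two-sided non-separating curves of $N_g$ inside an orientable subsurface of genus $\geq 3$ in which both remain non-separating) is a genuine topological claim that you have not established; the paper's argument sidesteps it entirely.
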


\begin{proof}

Let $S$ be the surface obtained by cutting $N_g$ along the simple closed curve $\alpha $ and $g^{\prime }$ the genus of $S$. Note that if $g$ is even and $S$ is orientable then $g^\prime = \frac {g-2}{2}\geq \frac {10-2}{2}=4$ and if $g$ is odd or $S$ is non-orientable then $g^{\prime }=g-2\geq 7$ since $g\geq 9$. We regard $t_\beta $ as an element of $\mathcal M(\Sigma _{g^\prime ,2})$ when $g$ is even and $S$ is orientable or $\mathcal T(N_{g^\prime ,2})$ when $g$ is odd or $S$ is non-orientable. Harer \cite{Harer} proved that $H_1(\mathcal M(\Sigma _{h,n});\mathbb Z)=1$ for $h\geq 3$ and Stukow \cite{Stukow2} proved that $H_1(\mathcal T(N_{h,n});\mathbb Z)=1$ for $h\geq 7$. Thus there exist $X_i,\ Y_i\in \mathcal M(S)$ or $\mathcal T(S)$ such that $t_\beta =\prod _i [X_i,\ Y_i]$. Note that $X_i$ and $Y_i$ commute with $t_\alpha $. Therefore, in $H_2(\mathcal{T}(N_g);\mathbb{Z})$, we have
\begin{eqnarray*}
\{ t_\alpha , t_\beta \} &=&\Bigl\{ t_\alpha , \prod _i [X_i, Y_i]\Bigr\} \stackrel{(I)}{=}\sum _i\{ t_\alpha , [X_i, Y_i]\} \\
&\stackrel{(I)\& (I\hspace{-.25em}I )}{=}&\sum _i\Bigl[\{ t_\alpha , X_i\} +\{ t_\alpha , Y_i \} -\{ t_\alpha , X_i\} -\{ t_\alpha , Y_i \} \Bigr] \\
&=&0.
\end{eqnarray*}
Thus we obtain the claim.
\end{proof}

The homomorphism $\rho _2|_{\mathcal T(N_g)}:\mathcal T(N_g)\rightarrow \operatorname{Aut}(H_1(N_g;\mathbb Z_2),\cdot )$ induces a homomorphism $H_2(\mathcal T(N_g);\mathbb Z)\rightarrow H_2(\operatorname{Aut}(H_1(N_g;\mathbb Z_2),\cdot );\mathbb Z)$ on their homology groups. Hence the equivalence classes of $D_{i,j}$ and $D$ in $H_2(\operatorname{Aut}(H_1(N_g;\mathbb Z_2),\cdot );\mathbb Z)$ are trivial by Lemma~\ref{commutator} and any element of $H_2(\operatorname{Aut}(H_1(N_g;\mathbb Z_2),\cdot );\mathbb Z)$ is represented by
\[
x=A^n \Biggl( \prod _{i=1}^{g-2}B_i^{m_i}\Biggr) B^mC^l.
\] 

\begin{proof}[\bf{Proof of Proposition~\ref{generator}}]

By the Hopf formula, any element of $H_2(\operatorname{Aut}(H_1(N_g;\mathbb Z_2),\cdot );\mathbb Z)$ is a product of commutators of the free group generated by $\{a_0, a_1, \cdots , a_{g-1}\}$. Hence the exponent sum of each $a_i$ in $x$ is zero. The exponent sum of each $a_i$ in $x$ is the following.
\begin{align*}
(\text{the exponent sum of }a_0)&=2n+m+l, \\
(\text{the exponent sum of }a_1)&=m_1+l, \\
(\text{the exponent sum of }a_2)&=-m_1+m_2+l, \\
(\text{the exponent sum of }a_3)&=-m_2+m_3+l, \\
(\text{the exponent sum of }a_4)&=-m_3+m_4-m+l, \\
(\text{the exponent sum of }a_5)&=-m_4+m_5+l, \\ 
(\text{the exponent sum of }a_6)&=-m_5+m_6+l, \\
(\text{the exponent sum of }a_7)&=-m_6+m_7, \\
&\ \hspace{0.07cm} \vdots \\
(\text{the exponent sum of }a_{g-2})&=-m_{g-3}+m_{g-2}, \\
(\text{the exponent sum of }a_{g-1})&=-m_{g-2}.
\end{align*}
The above equations give $m_{g-2}=m_{g-3}=\cdots =m_7=m_6=0$ and the following system of equations.
\[
\left(
\begin{array}{ccccccccccccccccccc}
2 & 0 & 0 & 0 & 0 & 0 & 1  & 1   \\
0 & 1 & 0 & 0 & 0 & 0 & 0  & 1   \\
0 &-1 & 1 & 0 & 0 & 0 & 0  & 1   \\
0 & 0 &-1 & 1 & 0 & 0 & 0  & 1   \\
0 & 0 & 0 &-1 & 1 & 0 & -1 & 1   \\
0 & 0 & 0 & 0 &-1 & 1 & 0  & 1   \\
0 & 0 & 0 & 0 & 0 &-1 & 0  & 1   \\
\end{array}
\right)
\left(
\begin{array}{ccccccccccccccccccc}
n \\
m_1 \\
m_2 \\
m_3 \\
m_4 \\
m_5 \\
m \\
l
\end{array}
\right)
=
\left(
\begin{array}{ccccccccccccccccccc}
0 \\
0 \\
0 \\
0 \\
0 \\
0 \\
0 
\end{array}
\right).
\]
By an elementary calculation, this matrix has rank $7$ and so the linear map $\mathbb Z^8\rightarrow \mathbb Z^7$ has a 1-dimensional kernel. We can check the kernel is generated by the vector $(-7,-2,-4,-6,4,2,12,2)$. Therefore $H_2(\operatorname{Aut}(H_1(N_g;\mathbb Z_2),\cdot );\mathbb Z)$ is generated by $x_0$ which is represented by an element
\[
A^{-7}B_1^{-2}B_2^{-4}B_3^{-6}B_4^{4}B_5^{2}B^{12}C^2.
\]
Thus we finish the proof.
\end{proof}

When $g\geq 7$ is odd, Theorem~\ref{h_2} is proved by Stein \cite{Stein}. It is sufficient for a proof of Theorem~\ref{h_2} to show that $x_0=0$ when $g\geq 10$ is even.

\begin{proof}[\bf{Proof of Theorem~\ref{h_2}}]

Recall that $\operatorname{Aut}(H_1(N_g;\mathbb Z_2),\cdot )$ is isomorphic to $O(g,\mathbb Z_2)=\{A\in GL(g,\mathbb Z_2)\ |\ ^t\!AA=E\}$. Under this identification, we define the injective homomorphism

\[
\begin{array}{cccc}
\iota _g: & \operatorname{Aut}(H_1(N_{g-1};\mathbb Z_2),\cdot )                     & \hookrightarrow  & \operatorname{Aut}(H_1(N_g;\mathbb Z_2),\cdot )                     \\
& \rotatebox{90}{$\in$} &                 & \rotatebox{90}{$\in$} \\
& \mbox{\smash{\huge $A$}}                    & \mapsto     & \left(
\begin{array}{ccc|c}
&&&0\\
&\mbox{\smash{\huge $A$}}&&\vdots\\
&&&0\\ \hline
0&\cdots &0&1
\end{array}
\right).

\end{array}
\]

Note that $\iota _g(a_i)=a_i$ for $i=1,\dots ,g-2$ and $\iota _g(b)=b$. Let $F$ and $F^\prime $ be free groups generated by $\{a_1,\dots ,a_{g-1},b\}$ and $\{a_1,\dots ,a_{g-2},b\}$ respectively and $\nu :F\rightarrow \operatorname{Aut}(H_1(N_g;\mathbb Z_2),\cdot ) $ and $\nu ^\prime :F^\prime \rightarrow \operatorname{Aut}(H_1(N_{g-1};\mathbb Z_2),\cdot ) $ natural projections. Then the following diagram is commutative.

\[
\xymatrix{
F^\prime  \ar[d]_{\nu ^\prime } \ar[r]^{\widetilde{\iota _g}} \ar@{}[dr]|\circlearrowleft & F \ar[d]^{\nu } \\
\operatorname{Aut}(H_1(N_{g-1};\mathbb Z_2),\cdot ) \ar[r]^{\iota _g} & \operatorname{Aut}(H_1(N_g;\mathbb Z_2),\cdot ) \\
}\
\]

The homomorphism $\widetilde{\iota _g}:F^\prime \rightarrow F$ is defined by $\widetilde{\iota _g}(a_i)=a_i$ for $i=1,\dots ,g-2$ and $\widetilde{\iota _g}(b)=b$. We denote the kernels of $\nu $ and $\nu ^\prime $ by $K$ and $K^\prime $ respectively. By the Hopf formula, the restriction $\widetilde{\iota _g}:K^\prime \cap [F^\prime ,F^\prime ]\rightarrow K\cap [F,F]$ of $\widetilde{\iota _g}$ induces the homomorphism $\widetilde{\iota _g}_\ast : H_2(\operatorname{Aut}(H_1(N_{g-1};\mathbb Z_2),\cdot );\mathbb Z) \rightarrow H_2(\operatorname{Aut}(H_1(N_g;\mathbb Z_2),\cdot );\mathbb Z)$. Since $H_2(\operatorname{Aut}(H_1(N_{g-1};\mathbb Z_2),\cdot );\mathbb Z)=0$ for $g\geq 10$ even(\cite{Stein}), it is enough for the proof of Theorem~\ref{h_2} to show that $\widetilde{\iota _g}_\ast $ is surjective for $g\geq 10$. By Proposition~\ref{generator}, $H_2(\operatorname{Aut}(H_1(N_g;\mathbb Z_2),\cdot );\mathbb Z)$ is generated by $x_0$ for $g\geq 9$ such that $x_0$ is represented by $A^{-7}B_1^{-2}B_2^{-4}B_3^{-6}B_4^{4}B_5^{2}B^{12}C^2$. Thus we can check $\widetilde{\iota _g}_\ast (x_0^\prime )=x_0$ by the definition of $\widetilde{\iota _g}$, where $x_0^\prime $ is represented by an element $A^{-7}B_1^{-2}B_2^{-4}B_3^{-6}B_4^{4}B_5^{2}B^{12}C^2 \in K^\prime \cap [F^\prime ,F^\prime ]$. Therefore $x_0$ is trivial and we complete the proof.

\end{proof}

\if0
\begin{itemize}
 \item $\mathcal{M}(N_1)=1$,
 \item $\mathcal{M}(N_2)=\bigl< a_1, y \big| a_1^2,y^2,(a_1y)^2\bigr>$,
 \item $\mathcal{M}(N_3)=\bigl< a_1, a_2, y \big| a_1a_2a_1a_2^{-1}a_1^{-1}a_2^{-1},y^2,(a_1y)^2,(a_2y)^2,(a_1a_2)^6\bigr>$.
\end{itemize}
\fi
%
\if0
\begin{align*}
\operatorname{Aut}(H_1(N_3;\mathbb Z_2),\cdot )&\cong \bigl< a_1, a_2, y \big| a_1a_2a_1a_2^{-1}a_1^{-1}a_2^{-1},y^2,(a_1y)^2,(a_2y)^2,(a_1a_2)^6,y,a_1^2,a_2^2\bigr> \\
&\leftrightarrow \bigl< a_1, a_2 \big| (a_1a_2)^3,a_1^2,a_2^2,(a_1a_2)^6,a_1^2,a_2^2\bigr> \\
&\leftrightarrow \bigl< a_1, a_2 \big| (a_1a_2)^3,a_1^2,a_2^2\bigr>.
\end{align*}
\fi

%
%
\par
{\bf Acknowledgement: } The authors would like to express his gratitude to Hisaaki Endo, for his encouragement and helpful advices. The authors also wish to thank Susumu Hirose for his comments and helpful advices.

\end{document}